\newcommand{\tuple}[1]{\langle #1 \rangle}
\newcommand{\bA}{A}
\newcommand{\boldA}{\mathbf{A}}
\newcommand{\Intervals}{\mathcal{I}}
\newcommand{\RealIntervals}{\Intervals_\Reals}
\newcommand{\RationalIntervals}{\Intervals_\Rationals}
\newcommand{\BorelSets}{\mathcal{B}_{\Reals}}
\newcommand{\Alg}{\mathcal{L}}
\newcommand{\RAlg}{\Alg_{\Reals}}
\newcommand{\QAlg}{\Alg_{\Rationals}}
\newcommand{\RAlgk}[1]{\Alg_{\Reals^#1}}
\newcommand{\QAlgk}[1]{\Alg_{\Rationals^#1}}
\newcommand{\ro}{{<}} 
\newcommand{\dom}{\textrm{dom}}
\newcommand{\Realbold}[1]{{#1}} 
\newcommand{\RealV}{{\Realbold{V}}}
\newcommand{\vC}{C}
\newcommand{\Rpsi}{{\boldsymbol{\psi}}}
\newcommand{\Rzeta}{{\boldsymbol{\zeta}}}
\newcommand{\RB}{{\boldsymbol{\beta}}}
\newcommand{\RG}{{\boldsymbol{\gamma}}}
\newcommand{\RZ}{{\boldsymbol{\varphi}}}
\newcommand{\Measures}{\mathcal{M}}
\newcommand{\ProbMeasures}{\Measures_1}
\newcommand{\cS}{\mathcal{S}}
\newcommand{\cT}{\mathcal{T}}
\newcommand{\cR}{\mathcal{R}}
\newcommand{\bbI}{\mathbb{I}}
\newcommand{\Expect}{\mathbb{E}}
\newcommand{\Beta}{\textrm{Beta}}
\renewcommand{\Pr}{\mathbb{P}}
\newcommand{\defas}{:=}
\newcommand{\given}{\mid}
\newcommand{\st}{\,:\,}
\newcommand{\Ind}{\mathbf 1}
\newcommand{\closure}{\overline}
\newcommand{\pars}{\,\cdot\,}
\def\[#1\]{\begin{align}#1\end{align}} 
\newcommand{\ohm}{{\mathrm \Omega}}
\newcommand{\arrowlt}{\vartriangleleft}
\newcommand{\arrowgt}{\vartriangleright}
\newcommand{\Reals}{\ensuremath{\mathbf{R}}}
\newcommand{\Rationals}{\ensuremath{\mathbf{Q}}}
\newcommand{\flip}{{\sl\ttfamily{flip}}}
\newtheorem{theorem}{Theorem}[section]
\newtheorem{lemma}[theorem]{Lemma}
\newtheorem{corollary}[theorem]{Corollary}
\newtheorem{proposition}[theorem]{Proposition}
\newdefinition{definition}[theorem]{Definition}
\newproof{proof}{Proof}
\newproof{cdfproof}{Proof of Theorem~\ref{CdeFtheorem}}
\journal{Annals of Pure and Applied Logic}
\begin{document}

\begin{frontmatter}

\title{Computable de Finetti measures}

\author[cef]{Cameron E.\ Freer}

\author[dmr]{Daniel M.\ Roy}

\address[cef]{Department of Mathematics,
Massachusetts Institute of Technology, Cambridge, MA, USA}

\address[dmr] {Computer Science and Artificial Intelligence Laboratory,\\
Massachusetts Institute of Technology, Cambridge, MA, USA}

\begin{abstract}
We prove a computable version of de~Finetti's  theorem on
exchangeable sequences of real random variables.  As a consequence,
exchangeable stochastic processes expressed in probabilistic functional
programming languages can be automatically rewritten as procedures
that do not modify non-local state.  Along the way, we prove that a
distribution on the unit interval is computable if and only if its
moments are uniformly computable.
\end{abstract}

\begin{keyword}
de Finetti's theorem \sep 
exchangeability \sep
computable probability theory \sep
probabilistic programming languages \sep
mutation 

\MSC[2010]
03D78 \sep 
60G09 \sep 
68Q10 \sep 
03F60 \sep 
68N18      
\end{keyword}

\end{frontmatter}


\section{Introduction}
The classical  de~Finetti theorem states that an exchangeable
sequence of real random variables is a mixture of independent and
identically distributed (i.i.d.)\ sequences of random variables.
Moreover, there is an (almost surely unique) measure-valued random
variable, called the \emph{directing random measure}, conditioned on
which the random sequence is i.i.d.  The distribution of the
directing random measure is called the \emph{de~Finetti measure} or
the \emph{mixing measure}.

This paper examines the \emph{computable} probability theory of
exchangeable sequences of real-valued random variables.  We prove a
computable version of de~Finetti's theorem: 
the distribution of an exchangeable sequence of real random
variables is computable if and only if its de~Finetti
measure is computable.  
The classical proofs do not readily effectivize; instead,
we show how to 
directly compute the de~Finetti measure (as characterized by the
classical theorem) in terms of a computable representation of the
distribution of the exchangeable sequence.  Along the way, we prove
that a distribution on $[0,1]^\omega$ is computable if and only if
its moments are uniformly computable, which may be of independent
interest.

A key step in the proof is to describe the de~Finetti measure in
terms of the moments of a set of random variables derived from the
exchangeable sequence.  When the directing random measure is (almost
surely) continuous, we can show that these moments are computable,
which suffices to complete the proof of the main theorem in this
case.  In the general case, we give a proof inspired by a randomized
algorithm that, 
with probability one, 
computes the de~Finetti measure.

\subsection{Computable Probability Theory}

These results are formulated in the Turing-machine-based bit-model
for computation over the reals (for a general survey, see Braverman
and Cook \citep{MR2208383}).  This computational model has been
explored both via the type-2 theory of effectivity (TTE) framework
for computable analysis, and via effective domain-theoretic
representations of measures. 

Computable analysis has its origins in the study of recursive
real functions, and can be seen as a way to provide ``automated
numerical analysis'' (for a tutorial, see Brattka, Hertling, and
Weihrauch~\citep{brattkatutorial}).  Effective domain theory has its
origins in the study of the semantics of programming languages, where it
continues to have many applications (for a survey, see
Edalat~\citep{MR1619397}).  Here we use methods from these
approaches to transfer a representational result from probability
theory to a setting where it can directly transform statistical
objects as represented on a computer.

The computable probability measures in the bit-model coincide with
those distributions from which we can generate exact samples to
arbitrary precision on a computer.  Results in the bit-model also have direct
implications for programs that manipulate probability distributions numerically.
In many areas of
statistics and computer science, especially machine learning, the
objects of interest include
distributions on data structures that are
higher-order or are defined using recursion.  Probabilistic
functional programming languages provide a convenient setting for
describing and manipulating such distributions, and the theory we present here is directly relevant to this setting.

Exchangeable sequences play a fundamental role in both
statistical models and their implementation on computers.
Given a \emph{sequential} 
description of an exchangeable process, in
which one uses previous samples or sufficient statistics to sample
the next element in the sequence, a direct implementation in a
probabilistic functional programming
language would need to use non-local communication (to access old
samples or update sufficient statistics).  This is often implemented
by modifying the program's internal state directly (i.e., using
\emph{mutation}), or via some indirect method such as a state monad.
The classical de~Finetti theorem implies that (for such sequences
over the reals) there is an alternative description in which samples
are conditionally independent (and so could be implemented without
non-local communication), thereby allowing parallel implementations.
But the classical result does not imply that there is a
\emph{program} that samples the sequence according to this
description. 
Even when there is such a program, the classical theorem does not
provide a method for finding it.
The computable de~Finetti theorem states that such a program
\emph{does} exist. Moreover, the proof itself provides a
\emph{method} for constructing the desired program.
In Section~\ref{funcpure} we describe how an implementation of the
computable de~Finetti theorem would perform a code transformation that
eliminates the use of non-local state in procedures that induce
exchangeable stochastic processes.

This transformation is of interest beyond its implications for
programming language semantics.  In statistics and machine learning,
it is often desirable to know the representation of an exchangeable
stochastic process in terms of its de~Finetti measure (for several
examples, see Section~\ref{partialexch}).  Many such processes in
machine learning have very complicated (though computable)
distributions, and it is not always feasible to find the de~Finetti
representation by hand.  The computable de~Finetti theorem provides
a method for automatically obtaining such representations.

\section{de~Finetti's Theorem}

We assume familiarity with the standard measure-theoretic
formulation of probability theory (see, e.g., Billingsley
\citep{MR1324786} or Kallenberg \citep{MR1876169}).
Fix a basic probability space $(\ohm, \mathcal{F}, \Pr)$ and let
$\BorelSets$ denote the Borel sets of $\Reals$.  Note that we will
use $\omega$ to denote the set of nonnegative integers (as in
logic), rather than an element of the basic probability space $\ohm$
(as in probability theory).  By a \emph{random measure} we mean a
random element in the space of Borel measures on $\Reals$, i.e., a
kernel from $(\ohm, \mathcal{F})$ to  $(\Reals,\BorelSets)$.  An
event $A \in \mathcal{F}$ is said to occur \emph{almost surely}
(a.s.)\ if $\Pr A = 1$.  We denote the indicator function of a set
$B$ by $\Ind_B$.

\begin{definition}[Exchangeable sequence]
Let $X=\{X_i\}_{i\ge1}$ be a sequence of real-valued random variables. 
We say that $X$ is \emph{exchangeable} if, for every finite set
$\{k_1,\dotsc,k_j\}$ of distinct indices,
$(X_{k_1}, \ldots, X_{k_j})$ is equal in distribution to $(X_1, \ldots, X_j)$.
\end{definition}

\begin{theorem}[de~Finetti {\citep[][Chap. 1.1]{MR2161313}}]
Let
$X=\{X_i\}_{i\ge1}$
be an exchangeable sequence of real-valued random variables.  
There is a random probability measure $\nu$ on $\Reals$ such that 
$\{X_i\}_{i\ge 1}$ is conditionally i.i.d.\ with respect to $\nu$. That is,
\[
\Pr [X \in \pars \given \nu \,] = \nu^\infty \quad \mathrm{a.s.} \label{ciid}
\]
Moreover, $\nu$ is a.s.\ unique and given by
\[
\label{aslimit}
\nu(B) = \lim_{n\to\infty} \frac1n {\sum_{i=1}^n \Ind_B(X_i)}\quad \mathrm{a.s.}, 
\]
where $B$ ranges over $\BorelSets$.
\qed\end{theorem}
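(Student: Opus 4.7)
The plan is a reverse (backward) martingale argument on the exchangeable $\sigma$-algebras of $X$, in the style of the proof given in Kallenberg~\citep{MR1876169}. Let $\mathcal{E}_n \subseteq \mathcal{F}$ be the sub-$\sigma$-algebra generated by the Borel functions of $X$ that are symmetric in their first $n$ arguments, and let $\mathcal{E} = \bigcap_{n\ge 1} \mathcal{E}_n$ be the exchangeable $\sigma$-algebra. For each fixed $B \in \BorelSets$, exchangeability gives $\Expect[\Ind_B(X_i) \given \mathcal{E}_n] = \Expect[\Ind_B(X_1) \given \mathcal{E}_n]$ for every $i \le n$, and since $\frac{1}{n}\sum_{i=1}^n \Ind_B(X_i)$ is itself $\mathcal{E}_n$-measurable, averaging in $i$ yields $\frac{1}{n}\sum_{i=1}^n \Ind_B(X_i) = \Expect[\Ind_B(X_1) \given \mathcal{E}_n]$ almost surely. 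The sequence on the right is a uniformly bounded reverse martingale along the decreasing family $\{\mathcal{E}_n\}$, so Doob's reverse martingale convergence theorem gives a.s.\ convergence to $\Expect[\Ind_B(X_1) \given \mathcal{E}]$, which already pins down the candidate limit in \eqref{aslimit}.

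Next I would assemble these scalar limits into a genuine random probability measure. Because $(\Reals, \BorelSets)$ is Polish, $X_1$ admits a regular conditional distribution given $\mathcal{E}$; I would take $\nu$ to be this regular conditional distribution, and then verify that the a.s.\ identity $\nu(B) = \Expect[\Ind_B(X_1) \given \mathcal{E}]$, which \emph{a priori} holds on a $B$-dependent full-measure set, can be upgraded to a single a.s.\ event on which \eqref{aslimit} holds for every $B \in \BorelSets$ simultaneously. The standard route is to establish \eqref{aslimit} first on a countable generating algebra (e.g., intervals with rational endpoints), where a single null set suffices, and then extend by a monotone class / $\pi$--$\lambda$ argument on the a.s.\ event where $\nu(\omega)$ is itself a probability measure.

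For the conditional i.i.d.\ property \eqref{ciid}, I would run the same reverse martingale argument on test functions of several variables. For bounded Borel $f_1, \ldots, f_k$ and $n \ge k$, exchangeability together with the $\mathcal{E}_n$-measurability of the corresponding $U$-statistic gives
\begin{equation*}
\Expect\Bigl[ \prod_{j=1}^k f_j(X_j) \Given \mathcal{E}_n \Bigr] = \frac{1}{(n)_k} \sum_{(i_1,\ldots,i_k)} \prod_{j=1}^k f_j(X_{i_j}) \quad \text{a.s.},
\end{equation*}
where the sum is over ordered $k$-tuples of distinct indices in $\{1,\ldots,n\}$ and $(n)_k = n(n-1)\cdots(n-k+1)$. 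Since this expression differs from the unrestricted product $\prod_{j=1}^k \bigl(\frac{1}{n}\sum_{i=1}^n f_j(X_i)\bigr)$ by $O(1/n)$ when the $f_j$ are bounded, both converge a.s.\ to $\prod_{j=1}^k \int f_j\, \dee\nu$. Thus $\Expect[\prod_{j=1}^k f_j(X_j) \given \mathcal{E}] = \prod_{j=1}^k \int f_j \, \dee\nu$ a.s., and because the right-hand side is $\sigma(\nu)$-measurable with $\sigma(\nu) \subseteq \mathcal{E}$, the identity also holds when conditioning on $\nu$. A monotone class argument with $f_j = \Ind_{B_j}$ extends this to arbitrary Borel cylinder events, yielding \eqref{ciid}; uniqueness of $\nu$ is then forced by \eqref{aslimit}.

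The hardest step, to my mind, is the measure-theoretic bookkeeping in the second paragraph: the defining identity for $\nu(B)$ only holds outside a $B$-dependent null set, and reconciling uncountably many such identities into a single regular conditional random measure for which the empirical-limit formula \eqref{aslimit} holds simultaneously over all Borel $B$ is precisely what regular conditional distributions (using the Polish structure of $\Reals$) accomplish, but it is the point at which one must be most careful.
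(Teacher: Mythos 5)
The paper does not prove this statement: it is the classical de~Finetti theorem, stated with a citation to Kallenberg and no argument of its own, so there is no internal proof to compare against. Your sketch is the standard reverse-martingale proof found in that reference (empirical averages as conditional expectations given the decreasing exchangeable $\sigma$-algebras, reverse martingale convergence, regular conditional distributions over the Polish space $\Reals$ to assemble $\nu$, and the $U$-statistic comparison for the conditional product structure), and it is correct in outline.
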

The random measure $\nu$ is called the \emph{directing random
measure}.\footnote{
The directing random
measure is only unique up to a null set, but it is customary to 
refer to it as if it were unique,
as long as we only rely on almost-sure properties.}  
Its distribution (a measure on probability measures),
which we denote by $\mu$, is called the \emph{de~Finetti measure} or
the \emph{mixing measure}.
As in Kallenberg \citep[][Chap. 1, Eq. 3]{MR2161313}, we may take
expectations on both sides of \eqref{ciid} to arrive at a
characterization 
\[
\Pr\{X \in \pars \} = \Expect \nu^\infty = \int m^\infty \mu(dm)
\label{mixture}
\]
of an exchangeable sequence as a mixture of i.i.d.\ sequences.

A Bayesian perspective suggests the following interpretation:
exchangeable sequences arise from independent observations from a
latent measure $\nu$.  
Posterior analysis follows from placing a prior distribution on
$\nu$.  
For further discussion of the implications of de~Finetti's theorem
for the foundations of statistical inference, see
Dawid~\citep{MR675977} and Lauritzen~\citep{MR754971}. 

In 1931, de~Finetti \citep{deFinetti} proved the classical result
for binary exchangeable sequences, in which case the de~Finetti
measure is simply a mixture of Bernoulli distributions; the
exchangeable sequence is equivalent to repeatedly flipping a coin
whose weight is drawn from some distribution on $[0,1]$.  
In 1937, de~Finetti \citep{MR1508036} extended the result to arbitrary 
real-valued exchangeable sequences.  We will refer to this more
general version as the \emph{de~Finetti theorem}.  
Later, Hewitt and Savage~\citep{MR0076206} 
extended the result to compact Hausdorff spaces,
and Ryll-Nardzewski~\citep{MR0088823} introduced a weaker notion
than exchangeability that suffices to give a conditionally i.i.d.\
representation.
Hewitt and Savage~\citep{MR0076206} provide a history of the early
developments, and a discussion of some subsequent extensions can be
found in Kingman~\citep{MR0494344}, Diaconis and
Freedman~\citep{MR786142}, and Aldous~\citep{MR883646}.  A recent
book by Kallenberg~\citep{MR2161313} provides a comprehensive view
of the area of probability theory that has grown out of de~Finetti's
theorem, stressing the role of invariance under symmetries.

\subsection{Examples}
\label{polyaex}
Consider an exchangeable sequence of $[0,1]$-valued random
variables.  In this case, the de~Finetti measure is a distribution
on the (Borel) measures on $[0,1]$.  For example, if the de~Finetti
measure is a Dirac measure on the uniform distribution on $[0,1]$
(i.e., the distribution of a random measure which is almost surely
the uniform distribution), then the induced exchangeable sequence
consists of independent, uniformly distributed random variables on
$[0,1]$.

As another example, let $p$ be a random variable, uniformly
distributed on $[0,1]$, and let $\nu\defas\delta_p$, i.e., the Dirac
measure concentrated on $p$.  Then the de~Finetti
measure is the uniform distribution on Dirac measures on $[0,1]$,
and the corresponding exchangeable sequence is $p,p,\dotsc$, i.e., a
constant sequence, marginally uniformly distributed.

As a further example, we consider a stochastic process
$\{X_i\}_{i\ge 1}$ 
composed of binary random variables
 whose finite marginals are given by
\[
\Pr \{X_1 = x_1,\, \ldots,\, X_n = x_n\} =
\frac{\Gamma(\alpha+\beta)}{\Gamma(\alpha)\Gamma(\beta)}\,
\frac{\Gamma(\alpha+ S_n)\Gamma(\beta+(n - S_n))}{\Gamma(\alpha+\beta+n)},
\label{marginals}
\]
where $S_n \defas \sum_{i\le n} x_i$, and where
$\Gamma$ is the Gamma function and $\alpha, \beta$ are
positive real numbers.  (One can verify that these
marginals satisfy Kolmogorov's extension theorem
\citep[][Theorem~6.16]{MR1876169}, and so there is a
stochastic process $\{X_i\}_{i\ge 1}$  with these finite marginals.)
Clearly this process is exchangeable, as $n$ and $S_n$
are invariant to order.
This process can also be described by a sequential scheme known as
P\'olya's urn \citep[][Chap.~11.4]{MR1093667}.
Each $X_i$ is sampled in turn according to the conditional
distribution
\[
\Pr\{X_{n+1}= 1 \given X_1 = x_1,\, \ldots,\, X_n = x_n\} =
\frac{\alpha + S_n}{\alpha + \beta + n}.
\]
This process is often described as repeated sampling from an urn: starting with $\alpha$ red
balls and $\beta$ black balls, a ball is drawn at each stage
uniformly at random, and then returned to the urn along with an
additional ball of the same color.
By de~Finetti's theorem, there exists a random variable $\theta \in [0,1]$
with respect to which the sequence is conditionally independent and
$\Pr \{ X_i = 1 \given \theta\} = \theta$ for each $i$.
In fact, 
\[\textstyle
\Pr[X_1 = x_1,\, \ldots,\, X_n = x_n \given \theta\,] =
\prod_{i\le n} \Pr [ X_i = x_i \given \theta\, ]
= \theta^{S_n} (1- \theta)^{(n- S_n)}.
\]
Furthermore, one can show that $\theta$ is
$\Beta(\alpha, \beta)$-distributed,
and so the process given by the marginals \eqref{marginals} is
called the Beta-Bernoulli process.  
Finally, the de~Finetti measure is
the distribution of the random Bernoulli measure $\theta \delta_1 +
(1-\theta) \delta_0$.

\subsection{The Computable de~Finetti Theorem}

In each of these examples, the de~Finetti measure is a 
\emph{computable measure}. (In Section~\ref{representation}, we
make this and related notions precise. For an implementation of the Beta-Bernoulli process in a probabilistic programming language, see 
Section~\ref{funcpure}.)
A natural question to ask is whether computable exchangeable
sequences always arise from computable de~Finetti measures.
In fact, computable de~Finetti
measures give rise to computable distributions on exchangeable sequences (see Proposition~\ref{mucomp}).
Our main result is the converse:
every computable distribution on real-valued exchangeable sequences 
arises from a computable de~Finetti measure.

\begin{theorem}[Computable de~Finetti] \label{CdeFtheorem} 
Let $\chi$ be the distribution of a real-valued exchangeable
sequence $X$, and let $\mu$ be the distribution of its directing
random measure $\nu$.  Then $\mu$ is 
computable relative to $\chi$, and $\chi$ is computable relative to
$\mu$.  In particular, $\chi$
is computable if and only if $\mu$ is computable.
\end{theorem}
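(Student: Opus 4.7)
The plan is to handle the two directions separately; the direction $\mu \Rightarrow \chi$ is a direct consequence of the mixture representation (and is essentially Proposition~\ref{mucomp}), while $\chi \Rightarrow \mu$ is the heart of the matter. For $\chi$ uniformly computable in $\mu$, the representation \eqref{mixture} lets us sample $\chi$ by first sampling $\nu \sim \mu$ and then, conditionally, sampling coordinates i.i.d.\ from $\nu$. Composing a computable sampler for $\mu$ with uniform i.i.d.\ sampling from a computable random measure is a standard computable operation, and the uniformity in $\mu$ follows because the composition is built directly from the representation.

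For $\mu$ uniformly computable in $\chi$, I would first reduce to the case of $[0,1]$-valued sequences using a computable homeomorphism $h\colon\Reals\to(0,1)$ (for instance $h(x) = \tfrac12 + \tfrac1\pi\arctan x$). Pushing $\chi$ forward coordinate-wise yields a computable distribution on $[0,1]^\omega$, and the corresponding de~Finetti measure on $\ProbMeasures$ can then be pushed back computably to the original $\mu$. The central tool is the moments characterization announced in the abstract: a distribution on $[0,1]^\omega$ is computable iff its joint moments are uniformly computable. I would apply this not to $\chi$ directly, but to the distribution $\tilde\mu$ on $[0,1]^\omega$ obtained by pushing $\mu$ forward along the Hausdorff moment map $M\colon\ProbMeasures\to[0,1]^\omega$, $m\mapsto(\int x^k\,m(dx))_{k\ge 1}$. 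This map is a homeomorphism onto its image and admits an effective inverse there, so $\mu$ is uniformly computable in $\tilde\mu$ and conversely. The joint moments of $\tilde\mu$ then unfold, via conditional i.i.d.\ given $\nu$ together with exchangeability, into joint marginal moments of $\chi$ at arbitrary distinct indices $i_{\ell,t}$:
\[
\Expect\Bigl[\prod_\ell M_{k_\ell}(\nu)^{j_\ell}\Bigr]
= \Expect\Bigl[\prod_{\ell,t} X_{i_{\ell,t}}^{k_\ell}\Bigr]
= \int_{[0,1]^\omega}\prod_{\ell,t} x_{i_{\ell,t}}^{k_\ell}\,d\chi(x),
\]
which is uniformly computable from $\chi$ as a bounded continuous integral against a computable probability measure. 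Feeding these moments into the characterization yields $\tilde\mu$, and inverting $M_*$ yields $\mu$.

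The main obstacle is twofold. First, the $[0,1]^\omega$ moments characterization itself requires an effective Hausdorff-type recovery of a measure from its joint moments; I would establish this separately (it is advertised in the abstract as a result of independent interest) and use it as a black box. Second, and more delicate, is the effective inversion from $\tilde\mu$ back to $\mu$ when the directing measure $\nu$ may have atoms: the preimages of atomic moment sequences lie near the boundary of the set of valid Hausdorff moment sequences, where inversion is not uniformly continuous. For the subcase in which $\nu$ is a.s.\ continuous the scheme above gives a deterministic algorithm directly, matching the remark in the introduction. The general case is where I expect the difficulty to concentrate, and where the ``randomized algorithm which succeeds with probability one'' idea from the introduction seems essential, for instance by blending the $X_i$ with an independent auxiliary computable absolutely continuous process to smooth out atoms before inverting, and then undoing the blend.
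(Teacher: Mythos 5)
Your split into two directions is the paper's, and your $\mu\Rightarrow\chi$ argument matches Proposition~\ref{mucomp} in substance (the paper integrates the order-preserving function $\prod_i V_{\sigma(i)}$ against the joint law of the $V_{\sigma(i)}$ under the right order topology rather than invoking sampler composition, but both rest on the same mixture identity). For $\chi\Rightarrow\mu$ you take a genuinely different route. The paper never passes through the Hausdorff moment map: it works directly with the interval masses $V_\tau\defas\nu\tau$ for $\tau$ in the rational algebra, whose mixed moments equal $\Pr\bigl(\bigcap_i\{X_i\in\tau(i)\}\bigr)$ and hence are only c.e.\ reals in general (co-c.e.\ for closures), and it then handles possible atoms of $\nu$ by a ``derandomized'' construction, writing the target quantity as a double supremum over refinements $\sigma\arrowlt\pi$ of expressions $\Expect\, q_{n,\vC}(V_\sigma,V_{\closure{\sigma}})$ in which the positive part of each approximating polynomial is evaluated on open sets and the negative part on their closures. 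Your substitution of the power moments $M_k(\nu)=\Expect[X_1^k\given\nu]$ for the $V_\tau$ is an attractive alternative: since $\Expect\prod_\ell M_{k_\ell}(\nu)^{j_\ell}=\Expect\prod_{\ell,t}X_{i_{\ell,t}}^{k_\ell}$ at distinct indices, the mixed moments of $\tilde\mu$ are integrals of bounded computable functions against finite-dimensional marginals of $\chi$ and so are honestly \emph{computable} reals by Corollary~\ref{compintegration}, with no continuity hypothesis on $\nu$; Theorem~\ref{computablemoments} then gives $\tilde\mu$ with no case split, so the atom problem never enters at the moment-computation stage.

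The gap is where you place it --- the inversion of $M_*$ --- but your diagnosis is misdirected and your proposed repair would fail. What Corollary~\ref{muiff} actually requires is only that $\mu$ of finite positive Boolean combinations of sets $\{\gamma\st\gamma\sigma>q\}$, for $\sigma\in\QAlg$ open and $q$ rational, be c.e.\ reals, i.e.\ lower bounds only. Since $\gamma\sigma=\sup_n\Expect_\gamma p_{n,\sigma}$ by Lemma~\ref{effindicator} and dominated convergence, and each $\Expect_\gamma p_{n,\sigma}$ is a fixed rational linear combination of finitely many coordinates of $M(\gamma)$, the set $\{\gamma\st\gamma\sigma>q\}$ pulls back under $M$ to a uniformly c.e.\ open subset of $[0,1]^\omega$, and the $\tilde\mu$-measure of a c.e.\ open set is a c.e.\ real. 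No uniform continuity of $M^{-1}$ near atomic moment sequences is needed --- only lower semicomputability of $\vec m\mapsto(M^{-1}\vec m)(\sigma)$ for open $\sigma$, which the polynomial approximations give you for free, atoms or no atoms; this is the step you must actually write out. By contrast, blending the $X_i$ with independent absolutely continuous noise replaces the directing random measure $\nu$ by a convolution $\nu*\rho$, and ``undoing the blend'' is a deconvolution, which is not a computable (nor even weakly continuous) operation on the representations in play, so that patch cannot be made to work. Delete it, prove the pullback claim above, and your argument closes deterministically.
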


The directing random measure is classically given 
a.s.\ by the explicit limiting expression \eqref{aslimit}.
Without a computable handle on the rate of convergence, the limit is not
directly computable, and so we cannot use this limit directly to
compute the de~Finetti measure.
However, we are able to
reconstruct the de~Finetti measure using the moments of 
random variables derived from the directing random measure.

\subsubsection{Outline of the Proof}\label{ideas}

Recall that $\BorelSets$ denotes the Borel sets of $\Reals$.  Let
$\RealIntervals$ denote the set of open intervals, and let
$\RationalIntervals$ denote the set of open intervals with rational
endpoints.  Then $\RationalIntervals \subsetneq \RealIntervals
\subsetneq \BorelSets$.   For $k \ge 1$ and
$\RB \in \BorelSets^k = \BorelSets \times \dotsm \times \BorelSets$, 
we write $\RB(i)$ to denote the $i$th coordinate of $\RB$.

Let $X=\{X_i\}_{i\geq 1}$ be an exchangeable sequence of real random
variables, with distribution $\chi$ and directing random measure
$\nu$.  For every $\RG \in \BorelSets$, we define a $[0,1]$-valued
random variable $V_{\RG} \defas \nu \RG$. 
A classical result in probability theory 
\citep[][Lem.~1.17]{MR1876169} implies that a Borel
measure on $\Reals$ is uniquely characterized by the mass it places
on the open intervals with rational endpoints.
Therefore, the distribution of the stochastic process $\{V_\tau\}_{\tau \in \RationalIntervals}$ determines the de~Finetti measure $\mu$ (the distribution of $\nu$).
\begin{definition}[Mixed moments]
Let $\{x_i\}_{i\in C}$ be a family of random variables indexed by a set $C$.  The \emph{mixed moments of $\{x_i\}_{i\in C}$} are the expectations 
$\Expect \bigl(\prod_{i=1}^k x_{j(i)}\bigr)$, for $k\ge1$ and $j \in C^k$.
\end{definition}
We can now restate the consequence 
of de~Finetti's theorem described in Eq.~\eqref{mixture},
in terms of the 
finite-dimensional marginals of the exchangeable sequence $X$ and
the mixed moments of 
$\{V_\RB\}_{\RB \in \BorelSets}$.

\begin{corollary} \label{link1}
$
\Pr \bigl ( \bigcap_{i=1}^k  \{X_i \in \RB(i)  \} \bigr ) 
= \Expect \bigl (\prod_{i=1}^k  V_{\RB(i)}  \bigr)
$ 
for $k\ge 1$ and $\RB \in \BorelSets^k$.
\qed\end{corollary}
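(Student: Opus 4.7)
The plan is to use the conditional i.i.d.\ characterization from de~Finetti's theorem (Eq.~\eqref{ciid}) and then pass to unconditional expectations via the tower property. Specifically, I would first observe that the event $\bigcap_{i=1}^k \{X_i \in \RB(i)\}$ corresponds to $X$ lying in the cylinder set $\RB(1) \times \cdots \times \RB(k) \times \Reals \times \Reals \times \cdots$.

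Next, I would evaluate the conditional probability of this cylinder set given $\nu$. By \eqref{ciid}, we have $\Pr[X \in \pars \mid \nu] = \nu^\infty$ a.s., so applying this to our cylinder set yields
\[
\Pr\bigl(\bigcap_{i=1}^k \{X_i \in \RB(i)\} \Given \nu\bigr) = \prod_{i=1}^k \nu(\RB(i)) = \prod_{i=1}^k V_{\RB(i)} \quad \text{a.s.},
\]
since the $\Reals$ factors contribute $\nu(\Reals) = 1$ each.

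Finally, I would take expectations of both sides and apply the tower property $\Expect[\Pr(A \mid \nu)] = \Pr(A)$ to conclude
\[
\Pr\bigl(\bigcap_{i=1}^k \{X_i \in \RB(i)\}\bigr) = \Expect \bigl(\prod_{i=1}^k V_{\RB(i)}\bigr),
\]
which is exactly the claim. Alternatively, one can read this off directly from the mixture representation \eqref{mixture}: $\Pr\{X \in \pars\} = \int m^\infty \mu(dm)$, evaluated on the above cylinder set, gives $\int \prod_{i=1}^k m(\RB(i))\, \mu(dm) = \Expect \prod_{i=1}^k V_{\RB(i)}$.

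There is no real obstacle here; the corollary is essentially a restatement of \eqref{mixture} in the notation of mixed moments, and the only care needed is to note that the random variables $V_{\RB(i)}$ are $[0,1]$-valued and bounded, so the product is integrable and the expectation swap is justified.
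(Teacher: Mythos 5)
Your argument is correct and matches the paper's intent: the corollary is stated without proof precisely because it is, as you say, the mixture representation \eqref{mixture} (equivalently, \eqref{ciid} plus the tower property) evaluated on the cylinder set $\RB(1)\times\dotsm\times\RB(k)\times\Reals^\omega$. Your added remark on boundedness of the $V_{\RB(i)}$ is a harmless extra precaution; nothing further is needed.
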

For $k\ge 1$, let $\RAlgk{k}$ denote the set of finite unions of open rectangles in
$\Reals^k$
(i.e., the lattice generated by $\RealIntervals^k$),
and let $\QAlgk{k}$ denote the 
set of finite unions of open rectangles in $\Rationals^k$.
(Note that $\RationalIntervals \subsetneq \QAlg \subsetneq \RAlg
\subsetneq \BorelSets$.)
As we will show in Lemma~\ref{compdiststandard}, when $\chi$ is
computable, 
we can enumerate all rational lower bounds
on quantities of the form
\[\textstyle
\Pr \bigl ( \bigcap_{i=1}^k  \{X_i \in \sigma(i)  \} \bigr ),
\label{ratbound}
\]
where $k\ge 1$ and $\sigma\in \QAlg^k$.

In general, we cannot enumerate all rational upper bounds on
\eqref{ratbound}.  However, if $\sigma\in\QAlg^k$ (for $k\ge1$) is such that, with probability one,
$\nu$ places no mass on the
boundary of
any $\sigma(i)$, then
$\Pr \bigl ( \bigcap_{i=1}^k  \{X_i \in \sigma(i)  \} \bigr )
= \Pr \bigl ( \bigcap_{i=1}^k  \{X_i \in {\closure{\sigma(i)}}  \}
\bigr )$,
where $\closure {\sigma(i)}$ denotes the closure of $\sigma(i)$.
In this case, for every rational upper bound $q$ on \eqref{ratbound},
we have that $1-q$ 
is a lower bound on 
\[\textstyle
\Pr \bigl ( \bigcup_{i=1}^k  \{X_i \not \in \closure{\sigma(i)}  \} \bigr ),
\]
a quantity for which we can enumerate all rational lower bounds.
If this property holds for all $\sigma \in \QAlg^k$,
then we can compute the mixed moments
$\{V_\tau\}_{\tau \in \QAlg}$.
A natural condition that implies this property for all
$\sigma\in\QAlg^k$ is
that $\nu$ is a.s.\ continuous (i.e.,
with probability one, $\nu\{x\} = 0$ for every $x \in \Reals$).

In Section~\ref{momentproblem}, we show how to computably recover a
distribution from its moments.  This suffices to recover the
de~Finetti measure when $\nu$ is a.s.\ continuous, as we show in
Section~\ref{easycase}.
In the general case, point masses in $\nu$ can prevent us from
computing the mixed moments.  Here we use a proof inspired by a
randomized algorithm that almost surely avoids the point masses and
recovers the de~Finetti measure.  For the complete proof, see
Section~\ref{mainproof}.


\section{Computable Representations}
\label{representation}

We begin by
introducing notions of computability on various spaces.
These definitions follow from more general TTE notions, though we
will sometimes derive simpler equivalent representations for the
concrete spaces we need (such as the real numbers, Borel measures on
reals, and Borel measures on Borel measures on reals).  For details,
see the original papers, as noted.

We assume familiarity with standard notions of computability
theory, such as computable and computably enumerable (c.e.)\ sets (see, e.g., Rogers
\citep{MR886890} or \citet{MR882921}).  Recall that  $r \in \Reals$
is a \emph{c.e.\ real} (sometimes called a \emph{left-c.e.}\ or
\emph{left-computable real})
when the set of all rationals less than $r$ is a c.e.\ set.
Similarly,  $r$ is a \emph{co-c.e.\ real} (sometimes called a
\emph{right-c.e.}\ or \emph{right-computable real}) when the set of all rationals greater than
$r$ is c.e. A real $r$ is a computable real when it is both a 
c.e.\ and co-c.e.\ real.

To represent more general spaces, we work in terms of an effectively
presented topology.
Suppose that $S$ is a second-countable $T_0$ topological space with
subbasis $\cS$.
For every point $x\in S$, define the set
$\cS_x \defas \{B \in \cS \st x \in B\}$.  
Because $S$ is $T_0$, we have $\cS_x \neq \cS_y$ when
$x\neq y$, and so the set $\cS_x$ uniquely determines the point $x$.
It is therefore convenient to define representations on
topological spaces 
under the assumption that the space is $T_0$.
In the specific cases below, we often have much more structure,
which we use to simplify the representations.

We now develop these definitions more formally.

\begin{definition}[Computable topological space]\label{ctzero}
Let $S$ be a second-count\-able $T_0$ topological space with a
countable subbasis $\cS$.
Let $s : \omega \to \cS$ be an enumeration of $\cS$ (possibly with
repetition), i.e., a total surjective (but not necessarily injective) function.
We say that $S$ is a \emph{computable topological space (with respect to $s$)} when the set
\[
\label{explicit}
\bigl\{ \langle m, n \rangle \st s(m) = s(n) \bigr\}
\]
is a c.e.\ subset of $\omega$, where $\langle \pars,\pars \rangle$ is a standard
pairing function.
\end{definition}

This definition of a computable topological space is derived
from Weihrauch's definition \citep[][Def.~3.2.1]{358357} in terms
of ``notations''. 
(See also, e.g., Grubba, Schr\"oder, and Weihrauch
\citep[][Def.~3.1]{MR2351939}.)

It is often possible to pick a subbasis $\cS$ (and enumeration $s$)
for which the elemental
``observations'' that one can computably observe are those of the form
$x\in B$, where $B \in \cS$.  Then the set 
$\cS_x = \{B \in \cS \st x \in B\}$
is computably enumerable (with respect to $s$) when the point $x$ is
such that it is eventually noticed to be in each basic open set
containing it; we will call such a point $x$ \emph{computable}. 
This is one motivation
for the definition of computable point in a $T_0$ space below.

Note that in a $T_1$ space, two computable points are computably
distinguishable, but in a $T_0$ space, computable points will be, in
general, distinguishable only in a computably enumerable fashion.
However, this is essentially the best that is possible, if the open
sets are those that
we can ``observe''.
(For more details on this approach 
to considering datatypes as topological spaces, in which basic open
sets correspond to ``observations'', see
Battenfeld, Schr\"oder, and Simpson \citep[][\S2]{MR2328287}.)
Note that the choice of topology and subbasis are essential; for
example, we can recover both computable reals and c.e.\ reals as
instances of ``computable point'' for appropriate computable
topological spaces, as
we describe in Section~\ref{repreals}.

\begin{definition}[Names and computable points]
Let $(S,\cS)$ be a comput\-able topological space with respect to an enumeration $s$.
Let $x \in S$.  The set
\[
\label{comppoints}
\{ n : s(n) \in \cS_x \} = \{ n : x \in s(n) \}
\]
is called the \emph{$s$-name} (or simply,
\emph{name}) of $x$.  
We say that $x$ is computable when its $s$-name is c.e.
\end{definition}
Note that this use of the term ``name'' 
is similar to the notion of a ``complete name'' 
(see \citep[][Lem.~3.2.3]{358357}),
but differs somewhat from TTE usage
(see \citep[][Def.~3.2.2]{358357}).

\begin{definition}[Computable functions]\label{cfuncs1}
Let $(S,\cS)$ and $(T,\cT)$ be comput\-able topological spaces (with respect to 
enumerations $s$ and $t$, respectively).
We say that a function 
$f: S \to T$ is
\emph{computable (with respect to $s$ and $t$)} when there is 
a partial computable functional
$g: \omega^\omega \to \omega^\omega$ such that for all $x \in
\dom(f)$ and enumerations $N=\{n_i\}_{i\in\omega}$ of an $s$-name of $x$, we
have that $g(N)$ is an enumeration of a $t$-name of $f(x)$.
\end{definition}

(See \citep[][Def.~3.1.3]{358357} for more details.)  Note that an implication of this definition is that computable functions are continuous.

Recall that a functional $g: \omega^\omega \to \omega^\omega$ is
partial computable if there is a monotone computable function $h :
\omega^{<\omega} \to \omega^{<\omega}$ mapping finite prefixes (of
integer sequences) to finite prefixes, such that given increasing
prefixes of an input $N$ in the domain of $g$, the output of $h$
will eventually include every finite prefix of $g(N)$.
(See \citep[][Def.~2.1.11]{358357} for more details.)
Informally, $h$ can be used to read in an enumeration of an $s$-name of a
point $x$ and outputs an enumeration of a $t$-name of
the point $f(x)$.

Let $(S,\cS)$ and $(T,\cT)$ be computable topological spaces.
In many situations where we are interested in establishing the
computability of some function $f: S \to T$, we may refer to the
function implicitly via pairs of points $x \in S$ and $y \in T$
related by $y = f(x)$.  In this case, we will say that $y$
(under the topology $\cT$)
is \emph{computable relative to} $x$ (under the topology $\cS$)
when $f: S\to T$  is a computable function.
We will often elide one or both topologies when they are clear from context.

\subsection{Representations of Reals}
\label{repreals}

We will use both the standard topology and right order topology on the real line $\Reals$.
The reals under the standard topology are a computable topological
space using the basis $\RationalIntervals$  with respect to a
straightforward effective enumeration; the computable points of this
space are the computable reals. 
The reals under the \emph{right order topology} are a computable
topological space using the basis 
\[
\cR_\ro \defas \bigl\{(c,\infty) \st c\in \Rationals \bigr\},
\]
under a standard enumeration; the computable points of this space
are the c.e.\ reals.  

Recall that, for $k\ge 1$, the set $\RationalIntervals^k$ is a basis for the (product of the) standard topology on $\Reals^k$
that is closed under intersection
and makes $(\Reals^k,\RationalIntervals^k)$ a computable topological space (under a straightforward enumeration of $\RationalIntervals^k$).  
Likewise, an effective enumeration of cylinders $\sigma \times \Reals^\omega$, for $\sigma \in \bigcup_{k\ge1} \RationalIntervals^k$, makes $\Reals^\omega$ a computable topological space.  Replacing $\RationalIntervals$ with $\cR_\ro$ and ``standard'' with ``right order'' above gives a characterization of computable vectors and sequences of reals under the right order topology.

We can use the right order topology to define a representation for open sets.
Let $(S,\cS)$ be a computable topological space, with respect to an enumeration $s$.  
Then an open set $B \subseteq S$ is \emph{c.e.\ open} when the indicator function $\Ind_B$ is computable with respect to $\cS$ and $\cR_\ro$.  
The c.e.\ open sets can be shown to be the computable points in the space of open sets under the Scott topology.
Note that for the computable topological space $\omega$ (under the discrete
topology and the identity enumeration)
the c.e.\ open sets are precisely the c.e.\ sets of naturals.

\subsection{Representations of Continuous Real Functions}

We now consider computable representations for continuous functions
on the reals.

Let $(S,\cS)$ and $(T,\cT)$ each be either of $(\Reals,\RationalIntervals)$ or
$(\Reals,\cR_\ro)$, and let $s$ and $t$ be the associated enumerations. 
For $k\ge1$, the compact-open topology on the space of continuous functions from $S^k$ to $T$ has a subbasis composed of sets of the form
\[
\bigl\{ f \st f\bigl(\closure{A}) \subseteq B \bigr\},
\]
where $A$ and $B$ are elements in the \emph{bases} $\cS^k$ and $\cT$, respectively. 
An effective enumeration of this subbasis can be constructed in a straightforward fashion from $s$ and $t$.

In particular, let $k \ge 1$ and let $s^k$ be an effective enumeration of $k$-tuples of basis elements derived from $s$.
Then a continuous function $f : (\Reals^k,\cS^k) \to (\Reals,\cT)$ is computable (under the compact-open topology) when
\[
\label{contfunct}
\bigl\{ \tuple{m,n} \st f\bigl(\closure{s^k(m)}) \subseteq t(n) \bigr\}
\]
is a c.e.\ set.
The set \eqref{contfunct} is the name of $f$.

A continuous function is computable in this sense if and only if it
is computable according to Definition~\ref{cfuncs1}.  (See
\citep[][Ch.~6]{358357} and \citep[][Thm.~3.2.14]{358357}).
Note that when $\cS = \cT = \RationalIntervals$, this recovers the standard definition of a computable real function.  When $\cS=\RationalIntervals$ and $\cT=\cR_\ro$, this recovers the standard definition of a lower-semicomputable real function \citep{MR1745071}.  

\subsection{Representations of Borel Probability Measures}

The following representations for probability measures on computable
topological spaces 
are devised from more general TTE representations in Schr\"oder
\citep{MR2351942} and Bosserhoff \citep{Bosserhoff08}, and agree with
 Weihrauch \citep{MR1694441} in the case of the unit interval. 
In particular, the representation for $\ProbMeasures(S)$ below is 
admissible with respect to the weak topology, hence
computably equivalent (see Weihrauch \citep[][Chap.~3]{358357})
to the canonical TTE representation for Borel measures given in
Schr\"oder \citep{MR2351942}. 

Schr\"{o}der~\citep{MR2351942}  has also shown the equivalence of this
representation for probability measures (as a computable space under the
weak topology) with \emph{probabilistic processes}.
A probabilistic process (see Schr\"oder and Simpson~\citep{1222755})
formalizes a notion of a program that uses randomness to
sample points in terms of their names of the form 
\eqref{comppoints}.

For a second-countable $T_0$ topological space $S$ with subbasis $\cS$,
let $\ProbMeasures(S)$ denote the set of Borel probability measures
on $S$
(i.e., the probability measures on the $\sigma$-algebra generated by
$\cS$).  
Such measures are determined by the measure they
assign to finite intersections of elements of $\cS$.  Note that $\ProbMeasures(S)$ is itself a second-countable $T_0$ space.

Now let $(S,\cS)$ be a computable topological space with respect to the enumeration $s$.   We will describe a
subbasis for $\ProbMeasures(S)$ that makes it a computable
topological space.
Let $\Alg_\cS$ denote the lattice generated by $\cS$ 
(i.e., the closure of $\cS$ under finite union and intersection), and let $s^\Alg$ be an effective enumeration derived from $s$.
Then, the class of sets
\[\textstyle \label{probsubbasis}
\{ \gamma \in \ProbMeasures(S) \st \gamma \sigma > q \},
\]
where  $\sigma \in \Alg_\cS$ and $q \in \Rationals$,
is a subbasis for the weak topology on $\ProbMeasures(S)$.
An effective enumeration of this subbasis can be constructed in a
straightforward fashion from the enumeration of $\cS$ and an effective enumeration $\{q_n\}_{n\in\omega}$ of the rationals,
making $\ProbMeasures(S)$ a computable topological space.  In particular, the name of a measure $\eta \in \ProbMeasures(S)$ is the set $\{ \tuple{m,n} \st \eta\bigl( s^\Alg(m) \bigr) > q_n \}$.

\begin{corollary}[Computable distribution]\label{cdist}
A Borel probability measure $\eta \in \ProbMeasures(S)$ is \emph{computable (under the weak topology)} if and only if $\eta B$ is a c.e.\ real, uniformly in the $s^\Alg$-index of $B \in \Alg_\cS$.
\qed\end{corollary}

Note that, for computable topological spaces $(S,\cS)$ and $(T,\cT)$ with enumerations $s$ and $t$, a measure $\eta \in \ProbMeasures(T)$ is computable relative to a point $x \in S$ when $\eta B$ is a c.e.\ real relative to $x$, uniformly in the $t^\Alg$-index of $B \in \Alg_\cT$.
Corollary~\ref{cdist} implies that the measure of a c.e.\ open set
(i.e., the c.e.\ union of basic open sets)
is a c.e.\ real (uniformly in the enumeration of the terms
in the union),
and that the measure of a co-c.e.\ closed set (i.e., the complement
of a c.e.\ open set) is a co-c.e.\ real (similarly uniformly); see,
e.g., \citep[][\S3.3]{945270} for details.
Note that on a discrete space, where singletons are both c.e.\ open
and co-c.e.\ closed, the measure of each singleton is a computable
real.  But for a general space, it is too strong to require that
even basic open sets have computable measure (see Weihrauch \citep{MR1694441} for a discussion;  moreover, such a requirement is stronger than what is necessary to ensure that  a, e.g., probabilistic Turing machine can produce exact samples to
arbitrary accuracy).

We will be interested in computable measures 
in $\ProbMeasures(S)$, where $S$ is either $\Reals^\omega$, $[0,1]^k$, or
$\ProbMeasures(\Reals)$. In order to 
apply Corollary~\ref{cdist}
to characterize concrete notions of
computability for $\ProbMeasures(S)$,
we will now describe choices of topologies on
these three spaces.

\subsubsection{Measures on Real Vectors and Sequences under the Standard Topology}

Using Corollary~\ref{cdist}, we can characterize the class of computable distributions 
on real sequences using the computable topological spaces
characterized above in Section~\ref{repreals}.
Let $\vec x=\{x_i\}_{i\ge1}$ be a sequence of real-valued random
variables (e.g., the exchangeable sequence $X$, or the derived random variables $\{V_\tau\}_{\tau
\in \RationalIntervals}$ under the canonical enumeration of
$\RationalIntervals$), and let $\eta$ be the joint distribution of $\vec x$.
Then $\eta$ is computable if and only if
$
\eta( \sigma \times \Reals^\omega) =
\Pr\bigl\{ x \in \sigma \times \Reals^\omega \bigr\}
$
is a c.e.\ real, uniformly in $k\ge 1$ and $\sigma \in \QAlgk{k}$.  
The following simpler characterization was given by M\"uller \citep[][Thm.~3.7]{310035}.
\begin{lemma}[Computable distribution under the standard topology]
\label{compdiststandard}\ \\
Let $\vec x=\{x_i\}_{i\ge1}$ be a sequence of real-valued random variables with joint distribution $\eta$.
Then $\eta$ is computable if and only if
\[\label{seqref2}
\eta( \tau \times \Reals^\omega)
=
\Pr\bigl({\textstyle \bigcap_{i=1}^k \{ x_i \in \tau(i) \}} \bigr)
\]
is a c.e.\ real, uniformly in $k\ge1$ and $\tau \in \RationalIntervals^k$.
\qed\end{lemma}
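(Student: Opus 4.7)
The forward direction is immediate from Definition~\ref{cdist}: for each $k\ge 1$ and $\tau\in\RationalIntervals^k$, the cylinder $\tau\times\Reals^\omega$ is a basic open set of $\Reals^\omega$ and hence lies in $\Alg_\cS$, so $\eta(\tau\times\Reals^\omega)$ is a c.e.\ real uniformly in the canonical index of $\tau$.

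For the backward direction, I first reduce to a finite-dimensional problem. Every $B\in\Alg_\cS$ is a finite Boolean combination of finitely many basic cylinders and hence depends on only finitely many coordinates, say the first $k$ (after padding). Hence $B=\rho\times\Reals^\omega$ where $\rho\subseteq\Reals^k$ is the corresponding Boolean combination of open rational rectangles, and it suffices to show that the $k$-dimensional marginal $\eta_k\rho\defas\eta(\rho\times\Reals^\omega)$ is uniformly c.e.\ in $\rho\in\QAlgk{k}$. By finite additivity, I may further decompose $\rho$ into the ``atoms'' of the Boolean algebra generated by the defining rectangles $\tau_1,\dotsc,\tau_n$, each atom being of the form $\bigcap_i \tau_i^{\varepsilon_i}$ with $\varepsilon_i\in\{+,-\}$, so that it suffices to compute $\eta_k$ on each atom as a uniformly c.e.\ real.

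The key technical tool is inner regularity of $\eta_k$ on the Polish space $\Reals^k$, combined with the probability normalization $\eta_k(\Reals^k)=1=\sup_N \eta_k((-N,N)^k)$. For any open $U\subseteq\Reals^k$ that is a finite union of open rational rectangles, $\eta_k U$ equals the supremum of $\sum_j \eta_k(R_j)$ over finite pairwise disjoint families $\{R_j\}\subseteq\RationalIntervals^k$ with $\bigsqcup_j R_j\subseteq U$; the containment condition is decidable by partitioning along the rational grid induced by the defining rectangles, and a uniformly enumerable supremum of c.e.\ reals is c.e. Combined with the probability normalization, this yields computable representations for the measures of finite unions of open rational rectangles and their complements.

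The main obstacle is a general atom $\bigcap_i \tau_i^{\varepsilon_i}$, which need not be open and may carry boundary mass unaccounted for by open-rectangle approximation alone. My plan is to decompose such an atom along the finite rational grid induced by the endpoints of $\tau_1,\dotsc,\tau_n$ into an open-interior piece plus finitely many ``boundary face'' pieces---each lying in a rational coordinate hyperplane---and to induct on dimension, treating each boundary face as an instance of the same problem on $\Reals^{k-1}$ under the induced marginal (itself uniformly c.e.\ by the hypothesis applied in one fewer variable). The probability normalization glues the interior and boundary contributions together by finite additivity, yielding the desired uniformly c.e.\ representation of $\eta_k\rho$; the recursion bottoms out at the trivial base case $k=0$. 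These details are worked out by M\"uller~\citep[Thm.~3.7]{310035}.
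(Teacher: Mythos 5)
The paper offers no proof of this lemma at all---it is delegated entirely to M\"uller~[Thm.~3.7]---so an honest argument is welcome, and your forward direction and your handling of finite \emph{unions} of open rational rectangles are essentially the right ingredients. Even there, though, the justification is thinner than you suggest: inner regularity alone does not yield a \emph{disjoint} family of \emph{rational} rectangles capturing the mass, because disjointifying forces you to discard separating hyperplanes, and $\eta$ may charge every rational hyperplane. One needs the further averaging observation that the total mass on any family of parallel disjoint hyperplanes is at most $1$, so among any $L$ rational grid offsets some grid carries mass at most $k/L$; that is the actual content of M\"uller's construction and should be stated.

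The genuine gap is in your treatment of the atoms $\bigcap_i\tau_i^{\varepsilon_i}$ involving complements. Your induction assumes the measure induced on a boundary hyperplane $\{x_j=c\}$ is ``uniformly c.e.\ by the hypothesis applied in one fewer variable,'' but the hypothesis says nothing about such restrictions: $\eta(\{c\}\times\tau')$ is the infimum of a decreasing sequence of c.e.\ reals (measures of shrinking slabs) and is in general only co-c.e. This is not repairable. Take $W$ c.e.\ and noncomputable, enumerated so that exactly one element enters at each stage, with $s_n$ the stage at which $n$ enters, and set $\eta=\bigl(\tfrac12-\sum_{n\in W}2^{-n-3}\bigr)\delta_0+\sum_{n\in W}2^{-n-3}\,\delta_{2^{-s_n}}+\tfrac12\delta_{-1}$. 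Every open rational interval then has uniformly c.e.\ (indeed computable) measure---an interval containing $0$ has measure $\tfrac12-\sum\{2^{-n-3}: s_n\le\log_2(1/q)\}$ plus possibly $\tfrac12$, a finite computable sum---yet $\eta([-2,0])=1-\sum_{n\in W}2^{-n-3}$ is not a c.e.\ real, and $[-2,0]=(-3,1)\setminus((-3,-2)\cup(0,1))$ lies in the algebra generated by $\RationalIntervals$ under finite unions and complements. So the backward direction is false if $\Alg_\cS$ is read literally as closed under complementation. The statement that is true---the one M\"uller proves, and the one the paper actually uses (its surrounding remarks grant co-c.e.\ closed sets only \emph{co}-c.e.\ measure, and Corollary~\ref{rodist} and Eq.~\eqref{boiled} use only positive Boolean combinations $\bigcup_i\bigcap_j$)---concerns finite unions of basic open cylinders. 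Restricted to those, your third paragraph, completed as above, is the entire proof; the dimension-lowering induction on boundary faces should be deleted rather than repaired.
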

Therefore knowing the measure of the sets in $\bigcup_k
\RationalIntervals^k \subsetneq \bigcup_k \QAlgk{k}$ is sufficient.
Note that the right-hand side of \eqref{seqref2} is precisely the form of the 
left-hand side of the expression in Corollary~\ref{link1}. 
Note also that one obtains a characterization of the computability of a
finite-dimensional vector by embedding it as an initial segment of
a sequence.

\subsubsection{Measures on Real Vectors and Sequences under the Right Order Topology}

Borel measures on $\Reals$ under the right order topology play an important role when
representing measures on measures, as Corollary~\ref{cdist}
portends.

\begin{corollary}[Computable distribution under the right order topology] \label{rodist}
Let $\vec x=\{x_i\}_{i\ge 1}$ be a sequence of real-valued random variables with joint distribution $\eta$.  
Then $\eta$ is computable under the (product of the) right order topology if and only if 
\[\textstyle
\eta\bigl( \bigcup_{i=1}^m \bigl ( (c_{i1},\infty) \times \dotsm
\times (c_{ik},\infty) \times \Reals^\omega\bigr)\bigr) = \Pr \bigl( \bigcup_{i=1}^m 
 \bigcap_{j=1}^k \{x_j > c_{ij} \} \bigr)
\]
is a c.e.\ real, uniformly in 
$k,m \ge 1$ and $\vC = (c_{ij}) \in \Rationals^{m\times k}$.
\qed\end{corollary}

Again, one obtains a characterization of the computability of a
finite-dimen\-sional vector by embedding it as an initial segment of
a sequence.
Note also that if a distribution on
$\Reals^k$ is computable under the standard topology, then it is
clearly computable under the right order topology.
The above characterization is used in the next section as well as in
Proposition~\ref{mucomp}, where we must compute an integral with
respect to a topology that is coarser than the standard topology.

\subsubsection{Measures on Borel Measures}

The de~Finetti measure $\mu$ is the distribution of the directing random measure $\nu$, an 
$\ProbMeasures(\Reals)$-valued random variable.  Recall the definition $V_\RB \defas \nu \RB$, for $\RB \in \BorelSets$.
From Corollary~\ref{cdist}, it follows that $\mu$ is computable under the weak topology if and only if
\[\textstyle
\mu (\bigcup_{i=1}^m \bigcap_{j=1}^{k} \{ \gamma \in
\ProbMeasures(\Reals) \st \gamma \sigma(j) > c_{ij} \})
&= \Pr \bigl(\textstyle \bigcup_{i=1}^m \bigcap_{j=1}^{k} \{V_{\sigma(j)} > c_{ij}\} \bigr)
\label{boiled}
\]
is a c.e.\ real, uniformly in $k,m\ge 1$ and $\sigma \in \QAlg^k$ 
and
$\vC = (c_{ij}) \in \Rationals^{m\times k}$.
As an immediate consequence of \eqref{boiled} and
Corollary~\ref{rodist}, we obtain the following characterization of computable de~Finetti measures.
\begin{corollary}[Computable de~Finetti measure]\label{muiff}
The de~Finetti measure $\mu$ is computable relative to the joint
distribution of $\{V_\tau\}_{\tau\in\QAlg}$ under the right order
topology, and vice versa.
In particular,
$\mu$ is computable if and only if the joint distribution of $\{V_\tau\}_{\tau\in\QAlg}$ is computable under the right order topology.
\qed
\end{corollary}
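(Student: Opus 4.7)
The plan is to derive the corollary directly from equation~\eqref{boiled}, which identifies the $\mu$-measure of any positive Boolean combination $\bigcup_i \bigcap_j \{\gamma \st \gamma\sigma(j) > c_{ij}\}$ in the weak-topology subbasis of $\ProbMeasures(\Reals)$ with the probability $\Pr(\bigcup_i \bigcap_j \{V_{\sigma(j)} > c_{ij}\})$ that appears in the right-order characterization of Corollary~\ref{rodist}. The entire strategy is to transfer c.e.-ness of these positive combinations across this identity in both directions.

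For the forward direction, I would assume $\mu$ is computable. By Definition~\ref{cdist}, $\mu B$ is c.e.\ uniformly in the canonical index of any $B$ in the algebra generated by the weak-topology subbasis, so in particular $\mu(\bigcup_i \bigcap_j \{\gamma \st \gamma\sigma(j) > c_{ij}\})$ is c.e.\ uniformly in $\sigma \in \QAlg^k$ and $\vC \in \Rationals^{m\times k}$. By~\eqref{boiled} the same holds for $\Pr(\bigcup_i \bigcap_j \{V_{\sigma(j)} > c_{ij}\})$, and applying Corollary~\ref{rodist} to each finite subvector $(V_{\sigma(1)}, \ldots, V_{\sigma(k)})$, together with the uniformity in $k$, yields computability of the joint distribution of $\{V_\tau\}_{\tau \in \QAlg}$ under the right order topology. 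The converse runs the same argument backwards: starting from the right-order computability of $\{V_\tau\}_{\tau \in \QAlg}$, Corollary~\ref{rodist} supplies c.e.\ lower bounds on $\Pr(\bigcup_i \bigcap_j \{V_{\sigma(j)} > c_{ij}\})$, equation~\eqref{boiled} transports them to c.e.\ lower bounds on $\mu(\bigcup_i \bigcap_j \{\gamma \st \gamma\sigma(j) > c_{ij}\})$, and these positive combinations are enough to witness computability of $\mu$ in the sense of Definition~\ref{cdist}.

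I expect the main obstacle to be conceptual rather than computational: one must confirm that c.e.-ness on the positive Boolean combinations from~\eqref{boiled} really does capture the full computational content of Definition~\ref{cdist} for the weak topology on $\ProbMeasures(\Reals)$. This is the direct analog, in measures-of-measures form, of the sufficiency of positive combinations already recorded in Corollary~\ref{rodist} for right-order products on $\Reals^k$; once that analogy is drawn, equation~\eqref{boiled} makes both directions of the equivalence immediate.
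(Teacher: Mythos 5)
Your proposal matches the paper's own (implicit) argument: the paper derives Corollary~\ref{muiff} as an immediate consequence of the identity~\eqref{boiled} together with Corollary~\ref{rodist}, transferring c.e.-ness of the positive Boolean combinations between the two sides exactly as you describe. The conceptual point you flag --- that c.e.-ness on these positive combinations suffices for Definition~\ref{cdist} on the weak topology --- is precisely what the paper builds into its statement of~\eqref{boiled} as the characterization of computability of $\mu$, so nothing further is needed.
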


\subsubsection{Integration}

The following lemma is a restatement of an integration result by
Schr\"oder \citep[][Prop.~3.6]{MR2351942}, which itself generalizes
integration results on standard topologies of finite-dimensional
Euclidean spaces by M\"uller \citep{310035} and the unit interval by
Weihrauch \citep{MR1694441}.

Define
\[
\bbI \defas \{ A \cap [0,1] \st A \in \RationalIntervals \},
\]
which is a basis for the standard topology on $[0,1]$,
and define
\[
\bbI_\ro \defas \{ A \cap [0,1] \st A \in \cR_\ro \},
\]
which is a basis for the right order topology on $[0,1]$.
\begin{lemma}[Integration of bounded lower-semicontinuous functions]\label{ceintegration}
Let $k\ge 1$ and let $\cS$ be either $\RationalIntervals$ or $\cR_\ro$.
Let 
\[
f : (\Reals^k,\cS^k) \to ([0,1], \bbI_\ro)
\]
be a continuous function and let $\mu$ be a 
Borel probability measure 
on $(\Reals^k, \cS^k)$.  Then 
\[
\int f \,d\mu
\]
is a c.e.\ real relative to $f$ and $\mu$.
\qed\end{lemma}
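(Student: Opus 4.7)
The plan is to reduce the claim to integration of simple nonnegative functions by writing $f$ as the pointwise supremum of a c.e.-enumerable nondecreasing sequence $g_n \le f$ of such functions, and then applying monotone convergence. Because $f$ is continuous into $([0,1], \bbI_\ro)$, its representation \eqref{contfunct} (adapted to the $[0,1]$-valued target) c.e.-enumerates exactly the pairs $(A,q)$ with $A\in\cS^k$ and $q\in\Rationals$ for which $f(\closure{A}) \subseteq (q,1]$. Fix such an enumeration $(A_j,q_j)_{j\in\omega}$, uniform in a code for $f$, and set
\[
g_n(x) \defas \max\bigl(\{0\} \cup \{q_j : j\le n,\ x \in A_j\}\bigr).
\]
By construction $0 \le g_n \le g_{n+1}$ and $g_n \le f$ everywhere, since $x \in A_j \subseteq \closure{A_j}$ forces $f(x) > q_j$.

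To verify $g_n(x) \uparrow f(x)$ pointwise, fix rational $q < f(x)$. By continuity, $f^{-1}((q,1])$ is $\cS^k$-open and contains $x$, hence contains a basic product $A \in \cS^k$ with $x \in A$. In each of the two cases $\cS = \RationalIntervals$ and $\cS = \cR_\ro$, one can strictly pull each coordinate of $A$ inward (shrinking a rational interval to a strictly smaller rational interval, or raising a rational left endpoint $(c,\infty)$ to $(c',\infty)$ with $c' > c$) to obtain $A' \in \cS^k$ with $x \in A'$ and Euclidean closure $\closure{A'} \subseteq A \subseteq f^{-1}((q,1])$. Then the pair $(A',q)$ eventually appears in the enumeration and witnesses $g_n(x) \ge q$ for all sufficiently large $n$. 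Monotone convergence now gives $\int f\,d\mu = \sup_n \int g_n\,d\mu$.

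It remains to show that each $\int g_n\,d\mu$ is a c.e.\ real, uniformly in $n$. Sort the distinct values of $\{0, q_1, \dots, q_n\}$ as $0 = r_0 < r_1 < \dots < r_M$; the layer-cake identity (Abel summation on a simple function) gives
\[
\int g_n\,d\mu = \sum_{j=1}^M (r_j - r_{j-1})\, \mu(E_j), \qquad E_j \defas \bigcup_{i \le n,\ q_i \ge r_j} A_i.
\]
Each $E_j$ is a finite union of basic sets, hence an element of $\Alg_{\cS^k}$ whose index is computable from $n$ and the enumeration; Definition~\ref{cdist} then says $\mu(E_j)$ is a c.e.\ real, uniformly in this index. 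Since the coefficients $r_j - r_{j-1}$ are nonnegative rationals, $\int g_n\,d\mu$ is a nonnegative rational combination of c.e.\ reals, hence c.e., uniformly in $n$. Taking the supremum over $n$ preserves c.e.-ness and yields $\int f\,d\mu$ as a c.e.\ real uniformly in the representations of $f$ and $\mu$.

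The main obstacle is the convergence step: the representation of $f$ quantifies over Euclidean closures of basic sets rather than the sets themselves, so without care one might only obtain an approximation from below to some lower-semicontinuous minorant of $f$. The mild regularity argument of strictly pulling every coordinate of $A$ inward is precisely what bridges this gap, and has the convenient feature that a single uniform proof then handles both of the topologies $\cS \in \{\RationalIntervals,\cR_\ro\}$ required by the lemma. The use of Abel summation (rather than writing the integral of a simple function as $\sum r_j \mu(\{g_n = r_j\})$, which would involve \emph{differences} of c.e.\ reals) is what keeps the final combination c.e.
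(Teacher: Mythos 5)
Your proof is correct. Note, however, that the paper does not actually prove Lemma~\ref{ceintegration}: it is stated as a restatement of an integration result of Schr\"oder (Prop.~3.6 of the cited paper), generalizing earlier results of M\"uller and Weihrauch, and the \qed\ marks a citation rather than an argument. What you have supplied is a self-contained proof of the cited fact, and it follows the standard route for such results: approximate the lower-semicomputable function from below by an effectively enumerated increasing sequence of simple functions subordinate to the basis, apply monotone convergence, and evaluate each simple integral via the layer-cake decomposition so that only \emph{nonnegative} rational combinations of c.e.\ reals (measures of finite unions of basic sets, which lie in $\Alg_{\cS^k}$ and are c.e.\ by Definition~\ref{cdist}) ever appear. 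The two points you flag as the genuine content --- shrinking each basic set strictly inward so that the Euclidean-closure condition $f(\closure{A'})\subseteq(q,1]$ in the representation \eqref{contfunct} is actually attained for every $q<f(x)$, and using Abel summation to avoid differences of c.e.\ reals --- are exactly the right ones, and both steps check out in each of the two cases $\cS\in\{\RationalIntervals,\cR_\ro\}$. The only cosmetic issue is that enumerated pairs with $q<0$ should be discarded (or absorbed by the $0$ in your $\max$, as you do) before sorting the $r_j$; this does not affect correctness. Relative to the paper, your argument buys self-containedness at the cost of re-deriving a known result; the paper's citation buys brevity and places the lemma in the TTE literature where it is proved in greater generality.
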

The following result of M\"uller~\citep{310035} is an immediate corollary.
\begin{corollary}[Integration of bounded continuous functions]\label{compintegration}
Let 
\[
g : (\Reals^k,\RationalIntervals^k) \to ([0,1],\bbI)
\]
be a continuous function and let $\mu$ be a 
Borel probability measure 
on $(\Reals^k, \RationalIntervals^k)$.
Then 
\[ 
\int g \,d\mu
\]
is a computable real relative to $g$ and $\mu$.
\qed\end{corollary}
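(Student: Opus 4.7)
The plan is to derive Corollary~\ref{compintegration} from Lemma~\ref{ceintegration} by a standard ``semicomputability sandwich'' argument: one uses the fact that a continuous map into $([0,1],\bbI)$ is continuous into both the right-order topology $\bbI_\ro$ and its mirror image, so that both $g$ and $1-g$ are lower-semicomputable in the sense of Lemma~\ref{ceintegration}.

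More concretely, I would first observe that every basic open set of $\bbI_\ro$ lies in $\bbI$: a set $(c,\infty)\cap[0,1]$ equals $(c,2)\cap[0,1]\in\bbI$, and an $\bbI$-index can be produced uniformly from a rational $c$. Consequently, any witness $(A,B)\in\RationalIntervals^k\times\bbI$ with $g(\closure{A})\subseteq B$ and $B\in\bbI_\ro$ is already on the list for $g$ computable into $([0,1],\bbI)$, so $g$ is a computable function from $(\Reals^k,\RationalIntervals^k)$ to $([0,1],\bbI_\ro)$, uniformly in the representation of $g$. The same reasoning applied to $h\defas 1-g$ works because $h(\closure{A})\subseteq(c,\infty)\cap[0,1]$ iff $g(\closure{A})\subseteq[0,1-c)=(-1,1-c)\cap[0,1]\in\bbI$; rational arithmetic converts an $\bbI_\ro$-index for the target of $h$ into an $\bbI$-index for the target of $g$, so $h$ is also computable from $(\Reals^k,\RationalIntervals^k)$ into $([0,1],\bbI_\ro)$, uniformly in $g$.

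Now I would apply Lemma~\ref{ceintegration} twice. It gives that $\int g\,d\mu$ is a c.e.\ real uniformly in $g$ and $\mu$, and likewise that $\int h\,d\mu=\int(1-g)\,d\mu$ is a c.e.\ real uniformly in $g$ and $\mu$. Since $\mu$ is a probability measure,
\[
\int g\,d\mu \ + \ \int (1-g)\,d\mu \ = \ 1,
\]
so $\int g\,d\mu=1-\int(1-g)\,d\mu$ is also a co-c.e.\ real uniformly in $g$ and $\mu$. A real that is simultaneously c.e.\ and co-c.e.\ is computable (with a program constructible uniformly from the two one-sided enumerations), which yields the claim.

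I do not expect a real obstacle here; the only mildly delicate point is the bookkeeping that the reduction from $\bbI$-indices to $\bbI_\ro$-indices, and from $g$ to $1-g$, is effective and uniform, so that the final combined procedure produces a representation of $\int g\,d\mu$ as a computable real uniformly in the inputs. Once that uniformity is verified, the statement is immediate from Lemma~\ref{ceintegration} and the fact that $\mu$ is a probability measure.
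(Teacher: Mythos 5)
Your proposal is correct and matches the paper's intent: the paper states this as an immediate corollary of Lemma~\ref{ceintegration} (citing M\"uller) without spelling out details, and the standard way to make it immediate is exactly your two-sided application of the lemma to $g$ and $1-g$, yielding that $\int g\,d\mu$ is both c.e.\ and co-c.e., hence computable. The bookkeeping you flag (translating $\bbI$-witnesses into $\bbI_\ro$-witnesses for $g$ and for $1-g$, using compactness of $\closure{A}$ where needed) is routine and uniform, so there is no gap.
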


\section{The Computable Moment Problem}
\label{momentproblem}

One often has access to the moments of a distribution, and wishes to
recover the underlying distribution.  
Let $\vec x = (x_i)_{i\in\omega}$ be a random vector in $[0,1]^\omega$ with
distribution $\eta$.  Classically, the distribution of $\vec x$ is
uniquely determined by the mixed moments of $\vec x$.  We show that
the distribution is in fact \emph{computable} from the mixed
moments. 

One classical way to pass from the moments of $\vec x$ to its
distribution is via the L\'evy inversion formula, which maps the
characteristic function $\phi_{\vec x} : \Reals^\omega \to
\mathbf{C}$, given by
\[
\phi_{\vec x}(t) \defas  \Expect(e^{i \langle t, \vec x \rangle}),
\]
to the distribution of $\vec x$.  However, even in the finite-dimensional
case, the inversion formula involves a limit
for which we have no direct handle on the rate of convergence, and
so the distribution it defines is not obviously computable.
Instead, we use a computable version of
the
Weierstrass approximation theorem to compute 
the distribution relative to
the mixed moments.

To show that $\eta$ is computable relative to the mixed moments,
it suffices to show that
$\eta(\sigma\times[0,1]^\omega) =  \Expect
\bigl(\Ind_{\sigma}(x_1,\dotsc,x_k)\bigr)$ is a c.e.\ real 
relative to the mixed moments, uniformly in $\sigma \in \bigcup_{k\ge1} \RationalIntervals^k$.
We begin by building sequences of polynomials that converge pointwise from below to indicator functions of the form $\Ind_\sigma$ for $\sigma \in \bigcup_{k\ge1} \QAlgk{k}$.

\begin{lemma}[Polynomial approximations] \label{effindicator}
Let 
$k\ge 1$ and 
$\sigma \in \QAlgk{k}$.
There is a sequence 
\[
\bigl\{ p_{n,\sigma} \st n\in \omega \bigr \}
\]
of rational polynomials of degree $k$,
computable uniformly in $n$, $k$, and $\sigma$, 
such that,
for all $\vec{x} \in [0,1] ^k$, we have 
\[
-2 \leq p_{n,\sigma}(\vec{x}) \leq \Ind_\sigma(\vec{x})
\qquad \text{and} \qquad
\lim_{m\to\infty}\, p_{m,\sigma}(\vec{x}) =\Ind_\sigma(\vec{x}).
\]
\end{lemma}

\begin{proof}
Let $k\ge 1$. 
For $\sigma \in \QAlgk{k}$, 
and $\vec x \in \Reals^k$, 
define $d(\vec x,[0,1]^k \setminus \sigma)$ to be the distance from $\vec x$ to the nearest point in $[0,1]^k \setminus \sigma$.  It is straightforward to show that
 $d(\vec x,[0,1]^k \setminus \sigma)$ is a computable real function of $\vec x$,
uniformly in $k$ and $\sigma$.

For $n\in\omega$, 
define $f_{n,\sigma} : \Reals^k \to \Reals$ by
\[
f_{n,\sigma}(\vec x) \defas -\frac 1 {n+1} + \min \{1, n\cdot d(\vec x,[0,1]^k\setminus \sigma) \},
\]
and note that $-1 \le f_{n,\sigma}(\vec x) \le \Ind_\sigma(\vec x) - \frac 1{n+1}$ and $\lim_{m\to \infty} f_{m,\sigma}(\vec x) = \Ind_\sigma(\vec x)$.
Furthermore, $f_{n,\sigma}(\vec x)$ is a computable (hence continuous) real
function of $\vec x$, uniformly in 
$n$, $k$, and 
$\sigma$. 

By the effective Weierstrass approximation theorem
(see Pour-El and Richards \citep[][p.~45]{MR1005942}), 
we can find (uniformly in $n$, $k$, and $\sigma$) a polynomial 
$p_{n,\sigma}$
with rational coefficients that uniformly approximates 
$f_{n,\sigma}$ to within $1/(n+1)$ on $[0,1]^k$.   These polynomials
have the desired properties.
\qed\end{proof}

We thank the anonymous referee for suggestions that simplified the proof of this lemma.

Using these polynomials, we can compute the distribution from the moments.  The other direction follows from computable integration results.

\begin{theorem}[Computable moments]
\label{computablemoments} 
Let $\vec x = (x_i)_{i\in\omega}$ be a random vector in
$[0,1]^\omega$ with distribution~$\eta$.
Then $\eta$ is 
computable relative to
the mixed moments of
$\{x_i\}_{i\in\omega}$, and vice versa.
In particular, $\eta$ is computable if and only if the mixed moments of
$\{x_i\}_{i\in\omega}$ are uniformly computable.
\end{theorem}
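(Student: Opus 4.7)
The forward direction is immediate from Corollary~\ref{compintegration}. Each mixed moment $\Expect\bigl(\prod_{j=1}^\ell x_{i_j}\bigr)$ is the integral against $\eta$ of a monomial, which is a computable function $[0,1]^\omega\to[0,1]$ depending on only finitely many coordinates; the index tuple $(i_1,\dotsc,i_\ell)$ enters the description of this function uniformly, so the mixed moments form a uniformly computable family of reals.

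For the converse, by Lemma~\ref{compdiststandard} applied to the random sequence $\vec x$ (noting that $[0,1]^\omega\subseteq\Reals^\omega$), it suffices to exhibit rational lower bounds, enumerable uniformly in $k\ge1$ and $\tau\in\RationalIntervals^k$, for
\[
\eta(\tau\times[0,1]^\omega) = \Pr\bigl({\textstyle\bigcap_{i=1}^k\{x_i\in\tau(i)\}}\bigr).
\]
I will in fact do this for every $\sigma\in\QAlgk{k}$. Invoke Lemma~\ref{effindicator} to obtain, uniformly in $n$ and $\sigma$, rational polynomials $p_{n,\sigma}$ in $k$ variables with $-1\le p_{n,\sigma}\le\Ind_\sigma$ on $[0,1]^k$ and $p_{n,\sigma}(\vec y)\to\Ind_\sigma(\vec y)$ pointwise on $[0,1]^k$.

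Writing $p_{n,\sigma}$ as a finite rational linear combination of monomials whose coefficients and exponents are computable from $n$ and $\sigma$, the expectation $\Expect\bigl(p_{n,\sigma}(x_1,\dotsc,x_k)\bigr)$ equals the same rational linear combination of mixed moments of $x_1,\dotsc,x_k$. Assuming the mixed moments are uniformly computable, this expectation is a computable real, uniformly in $n$ and $\sigma$. The pointwise bound $p_{n,\sigma}\le\Ind_\sigma$ gives $\Expect(p_{n,\sigma})\le\eta(\sigma\times[0,1]^\omega)$; the uniform bound $|p_{n,\sigma}|\le1$, combined with pointwise convergence, lets dominated convergence conclude that $\Expect(p_{n,\sigma})\to\eta(\sigma\times[0,1]^\omega)$. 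Hence
\[
\eta(\sigma\times[0,1]^\omega) \;=\; \sup_{n}\,\Expect\bigl(p_{n,\sigma}(x_1,\dotsc,x_k)\bigr),
\]
the supremum of a uniformly computable sequence of reals, and therefore a c.e.\ real uniformly in $\sigma$. Specializing to $\sigma=\tau\in\RationalIntervals^k$ and applying Lemma~\ref{compdiststandard} yields uniform computability of $\eta$ from the mixed moments.

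The main obstacle is that neither the effective Weierstrass approximation nor the computable Urysohn lemma furnishes a computable rate, so the convergence $p_{n,\sigma}\to\Ind_\sigma$ is only pointwise with no explicit modulus, and the sequence $\Expect(p_{n,\sigma})$ is not monotone in $n$. This is why the best one can hope to extract is lower-semicomputability of $\eta(\sigma\times[0,1]^\omega)$. The one-sided inequality $p_{n,\sigma}\le\Ind_\sigma$ is exactly what is needed to turn unrated pointwise convergence into a valid enumeration of rational lower bounds on the measure, which is all that Lemma~\ref{compdiststandard} requires.
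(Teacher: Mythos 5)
Your proposal is correct and follows essentially the same route as the paper's proof: the forward direction via Corollary~\ref{compintegration}, and the converse by approximating $\Ind_\sigma$ from below with the polynomials of Lemma~\ref{effindicator}, expressing each $\Expect\bigl(p_{n,\sigma}(x_1,\dotsc,x_k)\bigr)$ as a rational linear combination of mixed moments, and taking the supremum via dominated convergence to obtain a c.e.\ real. Your closing remark about why the one-sided bound $p_{n,\sigma}\le\Ind_\sigma$ is what rescues the argument in the absence of a convergence rate accurately reflects the role this plays in the paper.
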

\begin{proof}
Any monic monomial in $k$ variables, considered as a real
function, computably maps $[0,1]^k$ into
$[0,1]$ (under the standard topology).  Furthermore, 
as the restriction of $\eta$ to any $k$ coordinates is computable
relative to $\eta$ (uniformly in the coordinates), it follows from
Corollary~\ref{compintegration} that each mixed moment (the
expectation of a monomial under such a restriction of $\eta$) is computable relative to $\eta$, uniformly in the index of the monomial and the coordinates.

Let $k\ge 1 $ and $\sigma \in \RationalIntervals^k$.
To establish the computability of $\eta$, it suffices to show that
\[
\eta (\sigma\times[0,1]^\omega) 
= \Expect \bigl (\Ind_{\sigma \times [0,1]^\omega} (\vec x) \bigr) 
= \Expect \bigl (\Ind_{\sigma} (x_1,\dotsc,x_k) \bigr).
\]
is a c.e.\ real relative to the mixed moments, uniformly in $k$ and $\sigma$.
By Lemma~\ref{effindicator}, there is a uniformly computable
sequence of polynomials $(p_{n,\sigma})_{n\in\omega}$ that converge
pointwise from below to the indicator $\Ind_{\sigma}$.  Therefore,
by the dominated convergence theorem,
\[\label{sup1234}
\Expect \bigl (\Ind_{\sigma} (x_1,\dotsc,x_k) \bigr)
= \sup_n \Expect \bigl ( p_{n,\sigma}(x_1,\dotsc,x_k) \bigr).
\]
The expectation $\Expect \bigl ( p_{n,\sigma}(x_1,\dotsc,x_k)
\bigr)$ is a $\Rationals$-linear combination of mixed moments, hence
a computable real relative to the mixed moments, uniformly in $n$, $k$, and $\sigma$. Thus the supremum \eqref{sup1234} is a c.e.\ real relative to the mixed moments, uniformly in $k$ and $\sigma$.
\qed\end{proof}


\section{Proof of the Computable de~Finetti Theorem}

For the remainder of the paper, let $X$ be a real-valued exchangeable
sequence with distribution $\chi$, let $\nu$ be its directing
random measure, and let $\mu$ be the corresponding de~Finetti measure. 

Classically, the joint distribution of $X$ is uniquely determined by
the de~Finetti measure (see Equation~\ref{mixture}).
We now show that the joint distribution of $X$ is in fact
\emph{computable} relative to the de~Finetti measure.

\begin{proposition}\label{mucomp} 
The distribution $\chi$ is 
computable relative to $\mu$.
\end{proposition}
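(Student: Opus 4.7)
The plan is to invoke Lemma~\ref{compdiststandard}: it suffices to show that
$$\Pr\Bigl(\bigcap_{i=1}^k \{X_i \in \tau(i)\}\Bigr)$$
is a c.e.\ real uniformly in $k\ge 1$ and $\tau \in \RationalIntervals^k$. By the mixture representation~\eqref{mixture} (equivalently, by integrating Corollary~\ref{link1} against $\mu$), this probability equals $\int_{\ProbMeasures(\Reals)} \prod_{i=1}^k m(\tau(i))\, \mu(dm)$, so the task reduces to showing that this integral is uniformly c.e.\ from the representation of $\mu$.

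The next step is to verify that the integrand $F_\tau(m) \defas \prod_{i=1}^k m(\tau(i))$ is a computable function from $\ProbMeasures(\Reals)$ (as a computable topological space under the weak topology) into $([0,1], \bbI_\ro)$. For each $i$, the evaluation $m \mapsto m(\tau(i))$ is c.e.\ uniformly in the representation of $m$, directly from the chosen subbasis~\eqref{probsubbasis} for $\ProbMeasures(\Reals)$. Finite products of $[0,1]$-valued lower-semicomputable functions remain lower semicomputable, since for rational $q \ge 0$ one has $fg > q$ iff there exist positive rationals $r, s$ with $rs \ge q$, $f > r$, and $g > s$. Iterating, $F_\tau$ is computable into $([0,1], \bbI_\ro)$, uniformly in $k$ and $\tau$.

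Applying Lemma~\ref{ceintegration} to this integrand and the computable distribution $\mu$ immediately yields that $\int F_\tau\, d\mu$ is a c.e.\ real, uniformly in $k$, $\tau$, and $\mu$; combined with Lemma~\ref{compdiststandard}, this shows $\chi$ is uniformly computable in $\mu$. The one subtlety worth calling out — the reason one cannot simply apply Corollary~\ref{compintegration} — is that $m\mapsto m(\tau(i))$ is only lower semicomputable, not computable, in $m$, so the right-order topology must be used throughout. Beyond this, the proof is a direct assembly of the integration machinery already developed, and no obstacle that is substantively harder than the two-function product argument arises.
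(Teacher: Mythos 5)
Your proposal is correct and follows essentially the same route as the paper: reduce via Lemma~\ref{compdiststandard} and Corollary~\ref{link1} to showing that $\Expect\bigl(\prod_{i=1}^k V_{\tau(i)}\bigr)$ is a c.e.\ real, observe that the product is order-preserving (equivalently, lower semicomputable as a function of lower-semicomputable arguments), and conclude with the integration lemma under the right order topology. Your identification of the key subtlety --- that $m \mapsto m(\tau(i))$ is only lower semicomputable, so Corollary~\ref{compintegration} is unavailable and everything must run through $\bbI_\ro$ --- is exactly the point of the paper's proof.

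One technical wrinkle: as literally written, you apply Lemma~\ref{ceintegration} to an integrand $F_\tau$ defined on $\ProbMeasures(\Reals)$ and to the measure $\mu$ on that space, but the lemma as stated in the paper only covers integrands on $(\Reals^k,\cS^k)$ with $\cS \in \{\RationalIntervals, \cR_\ro\}$ and computable distributions on those spaces. The paper sidesteps this by first pushing $\mu$ forward to the joint distribution $\eta$ of $(V_{\tau(1)},\dotsc,V_{\tau(k)})$ on $[0,1]^k$ --- which is computable under the right order topology precisely because of the representation \eqref{boiled} --- and then integrating $f(x_1,\dotsc,x_k) = \prod_{i=1}^k x_i$ against $\eta$. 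Your argument is easily repaired the same way (your two-function product lemma then verifies the $(\bbI_\ro^k,\bbI_\ro)$-computability of $f$), or by appealing to the more general form of the integration result in Schr\"oder's work of which Lemma~\ref{ceintegration} is a restatement; but some such step is needed to match the hypotheses of the lemma you cite.
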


\begin{proof}
Let $k\ge 1$ and $\sigma \in \RationalIntervals^k$. All claims are
uniform in $k$ and $\sigma$.
In order to show that $\chi$, the distribution of $X$, is
computable relative to $\mu$, we must show that 
$\Pr \bigl( \bigcap_{i=1}^k \{ X_i \in \sigma(i) \} \bigr )$
is a c.e.\ real relative to $\mu$.
Note that, 
by Corollary~\ref{link1},
\[\textstyle
\Pr \bigl( \bigcap_{i=1}^k \{ X_i \in \sigma(i) \} \bigr ) 
= \Expect \bigl ( \prod_{i=1}^k V_{\sigma(i)} \bigr ).
\]
Let $\eta$ be the joint distribution of $(V_{\sigma(i)})_{i\le k}$ and let $f : [0,1]^k \to [0,1]$ be defined by
\[\textstyle
f(x_1,\dotsc,x_k) \defas \prod_{i=1}^k x_i.
\]
To complete the proof, we now show that 
\[
\int \!f\, d\eta = \Expect \bigl ( {\textstyle \prod_{i=1}^k }V_{\sigma(i)} \bigr )
\]
is a c.e.\ real relative to $\mu$.
Note that $\eta$ is computable 
under the right order topology
relative to $\mu$.
Furthermore, $f$ is order-preserving (in each dimension) and lower-semicontinuous, i.e., is a continuous (and obviously computable) function from $([0,1]^k,\bbI_\ro^k)$ to $([0,1],\bbI_\ro)$.
Therefore, by Lemma~\ref{ceintegration}, we have that $\int f\,
d\eta$ is a c.e.\ real relative to $\mu$.
\qed\end{proof}

We will first prove the main theorem under the additional hypothesis
that the directing random measure is almost surely continuous.  We
then sketch a randomized argument that succeeds with probability
one.  Finally, we present the proof of the main result, which can be
seen as a derandomization.

\subsection{Almost Surely Continuous Directing Random Measures}
\label{easycase}

For $k\ge 1$ and $\Rpsi\in\RAlg^k$, we say that
$\Rpsi$ is a \emph{$\nu$-continuity set}
when, for $i\le k$, we have $\nu(\partial\Rpsi(i))=0$ a.s., where
$\partial\Rpsi(i)$ denotes the boundary of $\Rpsi(i)$.

\begin{lemma}\label{cemon1} 
Relative to $\chi$,
the mixed moments 
of $\{V_\tau\}_{\tau\in\QAlg}$ are uniformly c.e.\ reals 
and the mixed moments
of $\{V_{\closure{\tau}}\}_{\tau\in\QAlg}$ 
are uniformly co-c.e.\ reals; in particular, if  $\sigma \in
\QAlg^k$ (for $k\ge 1$)  is a $\nu$-continuity set, then the mixed moment
$\Expect \bigl( \prod_{i=1}^k V_{\sigma(i)}\bigr)$ is a computable
real, uniformly in $k$ and $\sigma$.
\end{lemma}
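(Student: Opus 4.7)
The plan is to use Corollary~\ref{link1} to turn mixed moments into joint probabilities of the $X_i$'s, and then read off the required semicomputability directly from our oracle access to $\chi$.  For the c.e.\ half, fix $k \ge 1$ and $\tau \in \QAlg^k$.  The product $\tau(1) \times \cdots \times \tau(k)$ lies in $\QAlgk{k}$, so the cylinder $\tau(1)\times\cdots\times\tau(k)\times\Reals^\omega$ belongs to the algebra generated by the cylinder subbasis of $\Reals^\omega$; Corollary~\ref{link1} rewrites $\Expect \bigl(\prod_{i=1}^k V_{\tau(i)}\bigr)$ as $\chi$ of this cylinder, and Definition~\ref{cdist} (applied relative to $\chi$) then yields c.e.\ bounds uniformly in $\tau$.

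For the co-c.e.\ half I would pass to complements.  The closure $\closure{\tau(i)}$ of an element of $\QAlg$ is a finite union of closed rational intervals, so $\Reals \setminus \closure{\tau(i)}$ is a finite union of (possibly unbounded) open intervals with rational or infinite endpoints.  Each such interval is uniformly a c.e.\ union of elements of $\RationalIntervals$ (e.g., $(-\infty, a) = \bigcup_n (-n, a)$), so
\[
{\textstyle \bigcup_{i=1}^k} \bigl\{ X_i \in \Reals \setminus \closure{\tau(i)} \bigr\}
\]
is the preimage under $X$ of a c.e.\ open subset of $\Reals^\omega$, and its probability is therefore a c.e.\ real uniformly in $\tau$ by the remark after Definition~\ref{cdist}.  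Applying Corollary~\ref{link1} to the closed Borel sets $\closure{\tau(i)}$ then gives
\[
\Expect \bigl( {\textstyle \prod_{i=1}^k} V_{\closure{\tau(i)}} \bigr)
= 1 - \Pr \bigl( {\textstyle \bigcup_{i=1}^k} \{X_i \in \Reals \setminus \closure{\tau(i)}\} \bigr),
\]
exhibiting the desired co-c.e.\ representation uniformly in $\tau$.

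The in-particular clause follows immediately.  If $\sigma \in \QAlg^k$ is an $X$-continuity set, then a union bound over $i \le k$ shows that almost surely $X_i \notin \partial\sigma(i)$ for every $i$, whence a.s.\ $X_i \in \sigma(i)$ iff $X_i \in \closure{\sigma(i)}$; therefore $\Expect\bigl(\prod_i V_{\sigma(i)}\bigr) = \Expect\bigl(\prod_i V_{\closure{\sigma(i)}}\bigr)$ is simultaneously c.e.\ and co-c.e., hence computable.  There is no substantive obstacle in this argument; the one point requiring a little care is the uniform enumeration of the unbounded components of $\Reals \setminus \closure{\tau(i)}$, which is routine given the canonical enumerations of $\QAlg$ and $\Rationals$.
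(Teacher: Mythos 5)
Your proposal is correct and follows essentially the same route as the paper: apply Corollary~\ref{link1} to convert the mixed moments into finite-dimensional probabilities for $X$, observe that the open version yields a c.e.\ real because $\chi$ is computable and the closed version yields a co-c.e.\ real because $\closure{\sigma}$ is a co-c.e.\ closed set (you make the complementation explicit where the paper cites the general fact that computable measures assign co-c.e.\ reals to co-c.e.\ closed sets), and conclude via the a.s.\ equality $V_{\sigma(i)} = V_{\closure{\sigma(i)}}$ for $X$-continuity sets.
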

\begin{proof}
Let $k \ge 1$ and $\sigma \in \QAlg^k$. All claims are uniform in $k$ and $\sigma$.
By Corollary~\ref{link1}, 
\[\textstyle
\Expect \bigl (\prod_{i=1}^k  V_{\sigma(i)} \bigr ) = \Pr \bigl ( \bigcap_{i=1}^k  \{X_i \in \sigma(i)  \} \bigr ),
\]
which is a c.e.\ real relative to $\chi$.  
The set $\closure \sigma$ is a co-c.e.\ closed set in $\Reals^k$
because we can computably enumerate all $\tau\in\QAlg^k$ contained in the
complement of $\sigma$.
Therefore,
\[\textstyle
\Expect \bigl (\prod_{i=1}^k  V_{\closure{\sigma(i)}} \bigr ) 
 = \Pr \bigl ( \bigcap_{i=1}^k  \{X_i \in \closure{\sigma(i)}  \} \bigr )
\]
is the measure of a co-c.e.\ closed set, hence a co-c.e.\ real
relative to $\chi$.
When $\sigma$ is a $\nu$-continuity set,
\[\textstyle
\Expect \bigl( \prod_{i=1}^k V_{\sigma(i)}\bigr) = \Expect \bigl(
\prod_{i=1}^k V_{\closure{\sigma(i)}}\bigr),
\]
and so the expectation is a computable real relative to $\chi$.
\qed\end{proof}

\begin{proposition}[Almost surely continuous directing random measure]
\label{acCdeFtheorem}
Assume that $\nu$ is almost surely continuous.  
Then $\mu$ is 
computable relative to $\chi$.
\end{proposition}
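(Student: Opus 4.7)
The plan is to reduce the proposition to Theorem~\ref{computablemoments} (recovering a distribution from moments) and Corollary~\ref{muiff} (the characterization of computability of $\mu$). The key observation is that almost sure continuity of $\nu$ promotes every $\sigma \in \QAlg^k$ to an $X$-continuity set, which by Lemma~\ref{cemon1} upgrades the mixed moments of $\{V_\tau\}_{\tau \in \QAlg}$ from merely c.e.\ reals to fully computable reals, uniformly in $\chi$.

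First I would verify the $X$-continuity claim. For any $\sigma(i) \in \QAlg$, the boundary $\partial\sigma(i)$ is a finite subset of $\Rationals$. Since $\nu$ is a.s.\ continuous, $\nu(\{q\}) = 0$ a.s.\ for every rational $q$, so by the mixture representation \eqref{mixture},
\[
\Pr\{X_i = q\} = \Expect\,\nu(\{q\}) = 0.
\]
Summing over the finitely many points in $\partial\sigma(i)$ gives $\Pr\{X_i \in \partial\sigma(i)\} = 0$ for each $i$, so every $\sigma \in \QAlg^k$ is an $X$-continuity set. By Lemma~\ref{cemon1}, each mixed moment $\Expect\bigl(\prod_{i=1}^k V_{\sigma(i)}\bigr)$ is then a computable real, uniformly in $k$, $\sigma$, and $\chi$.

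Next I would fix a canonical enumeration of the countable algebra $\QAlg$ and view $\{V_\tau\}_{\tau \in \QAlg}$ as a random element of $[0,1]^\omega$ (valid because each $V_\tau = \nu\tau \in [0,1]$). Its mixed moments are exactly the uniformly computable reals produced in the previous step, so Theorem~\ref{computablemoments} yields that the joint distribution of $\{V_\tau\}_{\tau \in \QAlg}$ is uniformly computable in $\chi$ under the \emph{standard} topology on $[0,1]^\omega$. Since the standard topology refines the right order topology, this joint distribution is in particular computable under the right order topology, and then Corollary~\ref{muiff} delivers uniform computability of $\mu$ from $\chi$, completing the proof.

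I do not anticipate a serious technical obstacle, as the heavy lifting has already been carried out in Lemma~\ref{cemon1} and Theorem~\ref{computablemoments}; the proof is essentially an assembly. The one delicate point worth flagging is that the definition of $X$-continuity concerns only the \emph{coordinate-wise} boundaries $\partial\sigma(i)$ rather than the full topological boundary of $\sigma$ in $\Reals^k$, which is exactly why continuity of the one-dimensional random measure $\nu$ is strong enough to handle all of $\QAlg^k$ uniformly in $k$. Without this alignment between Lemma~\ref{cemon1}'s hypothesis and the one-dimensional hypothesis on $\nu$, the reduction would require a multidimensional strengthening; as stated, no such strengthening is needed.
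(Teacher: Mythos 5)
Your proposal is correct and follows essentially the same route as the paper: use a.s.\ continuity of $\nu$ to make every $\sigma\in\QAlg^k$ an $X$-continuity set, invoke Lemma~\ref{cemon1} to get uniformly computable mixed moments, apply Theorem~\ref{computablemoments} to recover the joint distribution of $\{V_\tau\}_{\tau\in\QAlg}$ under the standard topology, and pass to the right order topology to conclude via Corollary~\ref{muiff}. The extra justification you give for the continuity-set step (via $\Expect\,\nu(\{q\})=0$) and the remark about coordinate-wise boundaries are both accurate elaborations of what the paper leaves implicit.
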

\begin{proof}
Let $k\geq 1$ and $\sigma \in \QAlg^k$.
The almost sure continuity of $\nu$ implies that 
$\sigma$ is an $\nu$-continuity set.
Therefore, by Lemma~\ref{cemon1}, the moment
$\Expect \bigl ( \prod_{i=1}^k V_{\sigma(i)} \bigr)$  is a
computable real relative to $\chi$, uniformly in $k$ and $\sigma$.  The
computable moment theorem (Theorem~\ref{computablemoments}) then implies that
the joint distribution of the variables $\{V_{\tau}\}_{\tau \in \QAlg}$
is computable under the standard topology relative to $\chi$, and so
their joint distribution is also computable under the (coarser) right order topology relative to $\chi$. 
By Corollary~\ref{muiff}, this implies that $\mu$ is
computable relative to $\chi$.
\qed\end{proof}

\subsection{``Randomized'' Proof Sketch}

In general, the joint distribution of $\{V_\sigma\}_{\sigma\in\QAlg}$ is not computable under the standard topology because the directing random
measure $\nu$ may, with nonzero probability, have a point mass on a rational.
In this case, the mixed moments of $\{V_\tau\}_{\tau\in\QAlg}$ are
c.e., but not co-c.e., reals relative to~$\chi$.
As a result, the computable moment theorem
(Theorem~\ref{computablemoments}) is inapplicable. 
For arbitrary directing random measures, we give 
a proof of the computable de~Finetti theorem that works regardless
of the location of point masses.

Consider the following sketch of a ``randomized algorithm'':
We independently sample a countably infinite sequence of real numbers
$\boldA$ from a computable, absolutely continuous
distribution that has support everywhere on the real line (e.g., a
Gaussian or Cauchy).  Let $\Alg_\boldA$ denote the lattice generated
by open intervals with endpoints in $\boldA$.  Note that, with probability
one, $\boldA$ will be dense in $\Reals$ and every $\Rpsi \in
\Alg_\boldA$ will be a $\nu$-continuity set. 
If the algorithm proceeds analogously to the case where $\nu$ is almost
surely continuous, using $\Alg_\boldA$ as our basis, rather than
$\QAlg$, then it will compute the de~Finetti
measure with probability one.

Let $\bA$ be a dense sequence of reals such that $\nu(\bA) = 0$ a.s.
Consider the variables $V_\Rzeta$ defined in terms of elements
$\Rzeta$ of the new basis $\Alg_\bA$ (defined analogously to
$\Alg_\boldA$).
We begin by proving an extension of Lemma~\ref{cemon1}: 
The mixed moments of the set of variables $\{V_\zeta\}_{\zeta\in\Alg_\bA}$
 are computable
relative to $\bA$ and $\chi$.

\begin{lemma}\label{cemonreal} 
Let $k \ge 1$ and $\psi \in \Alg_\bA^k$.
The mixed moment 
$\Expect \bigl( \prod_{i=1}^k V_{\psi(i)}\bigr)$ 
is a computable real relative to $\bA$ and $\chi$, uniformly in
$k$ and $\psi$. 
\end{lemma}

\begin{proof}
Let $k\ge 1$ and $\psi \in \Alg^k_\bA$. All claims are uniform in
$k$ and $\psi$.
We first show that, relative to $\bA$ and $\chi$, the mixed moments
of $\{V_\zeta\}_{\zeta\in\Alg_\bA}$ are uniformly c.e.\ reals.
We can compute (relative to $\bA$) a sequence 
\[
\sigma_1, \sigma_2, \dotsc \in \QAlg^k
\]
 such that componentwise for each $n\ge 1$,
\[\textstyle
\sigma_n \subseteq \sigma_{n+1}
\qquad \text{and} \qquad
\bigcup_m \sigma_m = \psi.
\]
Note that if $\zeta, \varphi \in\QAlg$ satisfy
$\zeta \subseteq \varphi$,
then $V_{\zeta} \le V_{\varphi}$ (a.s.), 
and so, by the continuity of measures (and of multiplication), 
$\prod_{i=1}^k V_{\sigma_n(i)}$ converges from below to $\prod_{i=1}^k V_{\psi(i)}$
with probability one.
Therefore, the dominated convergence
theorem gives us
\[\textstyle
\label{supbA}
\Expect \bigl( \prod_{i=1}^k V_{\psi(i)}\bigr) 
= \sup_n \Expect \bigl( \prod_{i=1}^k V_{\sigma_n(i)}\bigr).
\]
Using Corollary~\ref{link1},
we see that the expectation $\Expect \bigl( \prod_{i=1}^k
V_{\sigma_n(i)}\bigr)$ is a
c.e.\ real relative to $\bA$ and $\chi$, 
uniformly in $n$,
and so the supremum 
\eqref{supbA} is a c.e.\ real relative to $\bA$ and $\chi$.

Similarly, the mixed moments of
$\{V_{\closure{\zeta}}\}_{\zeta\in\Alg_\bA}$ are uniformly co-c.e.\
reals relative to $A$ and $\chi$, 
as can be seen via a sequence of nested unions of
rational intervals whose union has complement equal to $\closure\psi$.  
Thus, because $\psi$ is a $\nu$-continuity set,
the mixed moment
$\Expect \bigl( \prod_{i=1}^k V_{\psi(i)}\bigr)$ 
is a computable real relative to $\bA$ and $\chi$. 
\qed\end{proof}

\begin{lemma}
\label{relchi}
The de~Finetti measure $\mu$ is 
computable relative to $\bA$ and $\chi$.
\end{lemma}
\begin{proof}
It follows immediately from Lemma~\ref{cemonreal} and Theorem~\ref{computablemoments} that the joint
distribution of $\{V_{\psi}\}_{\psi \in \Alg_\bA}$ is
computable relative to $\bA$ and $\chi$.
This joint distribution classically determines the de~Finetti
measure. 
Moreover, as we now show, we can compute (relative to $\bA$ and $\chi$) the desired
representation with respect to the (original) rational basis.
In particular, we prove that the joint distribution of $\{V_\tau\}_{\tau \in \QAlg}$ is computable under the right order topology relative to $\bA$ and $\chi$. 

Let $m,k \ge 1$, 
let $\tau \in \QAlg^k$, and
let $\vC = (c_{ij}) \in \Rationals^{m\times k}$.
We will express $\tau$ as a union of
elements of $\Alg^k_\bA$.
Note that $\tau$ is an c.e.\ open set (relative to $\bA$) with respect to the basis
$\Alg^k_\bA$. In particular, we can computably enumerate (relative to $\bA$,
and uniformly in $k$ and $\tau$) a sequence $\sigma_1, \sigma_2, \ldots \in \Alg^k_\bA$ such that $\cup_n \sigma_n = \tau$ and  $\sigma_n \subseteq \sigma_{n+1}$.  Note that $V_{\tau(j)} \ge V_{\sigma_n(j)}$ (a.s.)\ for all $n \ge 1$ and $j\le k$.
By the continuity of measures (and of union and intersection),
\[\textstyle
\label{supVtau}
\Pr \bigl ( \bigcup_{i=1}^m \bigcap_{j=1}^k \{V_{\tau(j)} > c_{ij}  \} \bigr )
= \sup_n \Pr \bigl ( \bigcup_{i=1}^m \bigcap_{j=1}^k \{V_{\sigma_n(j)} > c_{ij} \} \bigr ).
\]
The probability
$\Pr \bigl ( \bigcup_{i=1}^m \bigcap_{j=1}^k \{V_{\sigma_n(j)} >
c_{ij} \} \bigr )$ is a
c.e.\ real relative to $\bA$ and $\chi$, 
uniformly in $n$, $m$, $k$, $\tau$, and  $C$, and so the supremum 
\eqref{supVtau} is a c.e.\ real relative to $\bA$ and $\chi$,
uniformly in $m$, $k$, $\tau$, and $C$.
\qed\end{proof}

Let $\Phi$ denote the map taking $(\bA, \chi)$ to $\mu$, as
described in Lemma~\ref{relchi}.

Recall that $\boldA$ is a random dense sequence with a computable distribution, as defined above,
and let $\hat\mu=\Phi(\boldA,\chi)$.
Then $\hat \mu$ is a random variable, and moreover, $\hat \mu = \mu$
almost surely.   
However, while $\boldA$ is almost surely noncomputable,
the distribution of $\boldA$ is computable,
and so the distribution of $\hat \mu$ is computable relative to $\chi$.
Expectations with respect to the distribution of $\hat \mu$ can then be used to
(deterministically) compute $\mu$ relative to
$\chi$.

A proof along these lines could be made precise by making
$\ProbMeasures(\ProbMeasures(\ProbMeasures(\Reals)))$ into a computable
topological space.
Instead, in Section~\ref{mainproof}, we complete the proof by
explicitly computing $\mu$ relative to $\chi$ in terms of the
standard rational basis.  This construction can be seen as a
``derandomization'' of the above algorithm.

Alternatively, the above sketch could be interpreted as a degenerate
\emph{probabilistic process} (see Schr\"oder and Simpson
\citep{1222755}) that samples a name of the de~Finetti
measure with probability one.  Schr\"oder \citep{MR2351942} shows
that representations in terms of probabilistic processes are
computably reducible to representations of computable distributions.

The structure of the derandomized argument occurs in other proofs in computable analysis and probability theory. 
Weihrauch \citep[][Thm.~3.6]{MR1694441} proves a computable
integration result via an argument that could likewise be seen as a
derandomization of an algorithm that densely subdivides the unit
interval at random locations to find continuity sets.  
Bosserhoff \citep[][Lem.~2.15]{Bosserhoff08} uses a similar argument
to compute a basis for a computable metric space, for which every
basis element is a continuity set; this suggests an alternative
approach to completing our proof.
M\"uller \citep[][Thm.~3.7]{310035}
uses a similar construction to find open hypercubes 
such that for any $\epsilon > 0$, the probability on their
boundaries is less than $\epsilon$.
These arguments also resemble the
proof of the classical Portmanteau theorem
\citep[][Thm.~4.25]{MR1876169}, in which an uncountable family of sets
with disjoint boundaries is defined, almost all of which are continuity sets.

\subsection{``Derandomized'' Construction}
\label{mainproof}

Let $m,k \ge 1$ and $\vC = (c_{ij}) \in \Rationals^{m \times k}$.
By an abuse of notation, we define
\[\label{niceind}
\Ind_\vC : [0,1]^k \to [0,1]
\]
to be the indicator function for the set
\[\textstyle \label{niceset}
\bigcup_{i=1}^m (c_{i1},1] \times \dotsm \times (c_{ik},1].
\]
For $n \in \omega$, we denote by $p_{n,\vC}$ the polynomial $p_{n,\sigma}$ 
(as defined in Lemma~\ref{effindicator}),
where  
\[\textstyle
\sigma \defas \bigcup_{i=1}^m (c_{i1}, 2) \times \cdots \times (c_{ik},2) \in \QAlgk{k}.
\]
Here, we have arbitrarily chosen $2 > 1$ so that the sequence of polynomials $\{p_{n,C}\}_{n\in\omega}$ converges pointwise from below to $\Ind_\vC$ on $[0,1]^k$.

Let $\vec x=(x_1, \ldots, x_k)$ and $\vec y =(y_1,\dotsc,y_k)$.
We can write 
\[
p_{n,\vC}(\vec x)  = p^+_{n,\vC}(\vec x) - p^{-}_{n,\vC}(\vec x),
\]
where $p^+_{n,\vC}$ and $p^-_{n,\vC}$ are polynomials with positive coefficients. 
Define the $2k$-variable polynomial
\[
q_{n,\vC}(\vec x, \vec y) \defas p^+_{n,\vC}(\vec x) - p^{-}_{n,\vC}(\vec y).
\]
We denote
\[
q_{n,\vC}(\RealV_{\RZ(1)}, \ldots, \RealV_{\RZ(k)}, \RealV_{\Rzeta(1)}, \ldots, \RealV_{\Rzeta(k)})
\]
by
$q_{n,\vC}(\RealV_{\RZ},\RealV_{\Rzeta})$, and similarly with $p_{n,\vC}$.

\begin{proposition}
Let $n \in \omega$, let $k,m\ge 1$, let $\sigma\in\QAlg^k$,
and let $\vC \in\Rationals^{m \times k}$.
Then $\Expect q_{n,\vC}
(V_\sigma, V_{\closure{\sigma}})$
is 
a c.e.\ real relative to $\chi$,
uniformly in $n$, $k$, $m$, $\sigma$, and $\vC$.
\label{polypoly}
\end{proposition}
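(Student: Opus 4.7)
The plan is to reduce the claim to Lemma~\ref{cemon1} by exploiting the crucial sign decomposition $p_{n,\vC} = p^+_{n,\vC} - p^-_{n,\vC}$ into polynomials with nonnegative rational coefficients. By linearity of expectation,
\[
\Expect q_{n,\vC}(V_\sigma, V_{\closure{\sigma}})
= \Expect p^+_{n,\vC}(V_\sigma) - \Expect p^-_{n,\vC}(V_{\closure{\sigma}}),
\]
so it suffices to show that the first term is c.e.\ and the second term is co-c.e., each uniformly in the data, relative to $\chi$.

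First I would expand $p^+_{n,\vC}(\vec x)$ as a nonnegative $\Rationals$-linear combination of monomials $\prod_{i=1}^k x_i^{a_i}$. Substituting $x_i = V_{\sigma(i)}$, each monomial expectation $\Expect \prod_{i=1}^k V_{\sigma(i)}^{a_i}$ is, by regrouping repeated factors, a mixed moment of $\{V_\tau\}_{\tau\in\QAlg}$ in the sense of the definition preceding Corollary~\ref{link1} (which permits $\RB\in C^k$ with repeated coordinates). Lemma~\ref{cemon1} therefore makes each such monomial expectation a uniformly c.e.\ real relative to $\chi$. Taking the nonnegative rational linear combination prescribed by $p^+_{n,\vC}$ preserves this, so $\Expect p^+_{n,\vC}(V_\sigma)$ is a c.e.\ real, uniformly in $n$, $\sigma$ and $\vC$ (the coefficients and exponents of $p^+_{n,\vC}$ are extractable uniformly in $n$ and $\vC$ by Lemma~\ref{effindicator}).

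The same argument applied to $p^-_{n,\vC}(V_{\closure{\sigma}})$, using the second half of Lemma~\ref{cemon1}, shows that $\Expect p^-_{n,\vC}(V_{\closure{\sigma}})$ is a uniformly co-c.e.\ real relative to $\chi$. A c.e.\ real minus a co-c.e.\ real is c.e., since the negation of a co-c.e.\ real is c.e.\ and the sum of two c.e.\ reals is c.e., all uniformly. Combining these yields that $\Expect q_{n,\vC}(V_\sigma, V_{\closure{\sigma}})$ is a c.e.\ real, uniformly in $n$, $\sigma$ and $\vC$.

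The only conceptual point worth watching is that one must not evaluate both $p^+$ and $p^-$ at the same argument $V_\sigma$ (which would give $\Expect p_{n,\vC}(V_\sigma)$ as a difference of two c.e.\ reals, which is not generally c.e.). The trick that makes the proposition work is precisely that the coordinates where the negative coefficients appear have been shifted from $V_\sigma$ to $V_{\closure\sigma}$; this converts the troublesome subtraction of c.e.\ reals into a subtraction of a co-c.e.\ real from a c.e.\ real, which is c.e. Beyond that observation, the argument is a direct application of linearity of expectation together with Lemma~\ref{cemon1}.
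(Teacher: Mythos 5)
Your proposal is correct and follows essentially the same route as the paper: decompose $q_{n,\vC}(V_\sigma, V_{\closure{\sigma}}) = p^+_{n,\vC}(V_\sigma) - p^-_{n,\vC}(V_{\closure{\sigma}})$, apply Lemma~\ref{cemon1} to make each monomial expectation of the first term c.e.\ and of the second term co-c.e., and conclude by linearity of expectation. Your closing remark about why the negative part must be evaluated at $V_{\closure{\sigma}}$ rather than $V_\sigma$ correctly identifies the point of the construction.
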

\begin{proof}
By Lemma~\ref{cemon1}, relative to $\chi$, 
and uniformly in $n$, $k$, $m$, $\sigma$, and $\vC$,
each monomial of
$p^+_{n,\vC}(V_{\sigma})$
has a c.e.\ real expectation, and each monomial of
$p^-_{n,\vC}(V_{\closure{\sigma}})$  has a co-c.e.\ real
expectation, and so by the linearity of expectation $\Expect q_{n,\vC}
(V_\sigma, V_{\closure{\sigma}})$
is a c.e.\ real.
\qed\end{proof}

In the final proof we use the following dense partial order on
products of $\RAlg$.

\begin{definition}
Let $k\ge 1$.
We call $\Rpsi \in \RAlg^k$ a \emph{refinement} of $\RZ \in
\RAlg^k$, and write $\Rpsi \arrowlt \RZ$, when
\[
\closure{\Rpsi(i)} \subseteq \RZ(i)
\]
for all $i\le k$. 
\end{definition}

We are now ready to prove the main theorem.

\begin{cdfproof}[Computable de~Finetti]
The distribution $\chi$
(of the exchangeable sequence $X$)
is computable relative to the de~Finetti measure
$\mu$ by Proposition~\ref{mucomp}.
We now give a proof of the other direction, 
showing that the joint distribution of
$\{V_\sigma\}_{\sigma\in\QAlg}$ is computable 
under the right order topology
relative to $\chi$, which by Corollary~\ref{muiff} will complete the proof.

Let $k,m\ge 1$, let $\pi \in \QAlg^k$, and let $\vC = (c_{ij}) \in \Rationals^{m\times k}$.  
For $\Rzeta \in \RAlg^k$, let $V_{\Rzeta}$ denote the $k$-tuple $(V_{\Rzeta(1)},\dotsc,V_{\Rzeta(k)})$ and similarly for $V_{\closure{\Rzeta}}$.  Take $\Ind_C$ to be defined as above in~\eqref{niceind} and \eqref{niceset}.
It suffices to show that
\[\textstyle \label{e0}
\Pr \left (\bigcup_{i=1}^m \bigcap_{j=1}^k \{V_{\pi(j)} > c_{ij} \} \right ) 
= \Expect \Ind_\vC(V_{\pi})
\]
is a c.e.\ real relative to $\chi$, uniformly in $k$, $m$, $\pi$, and $C$. 
We do this by a series of reductions, which results in a supremum
over quantities of the form
$\Expect q_{n,C} (V_\sigma, V_{\closure{\sigma}})$ for $\sigma\in\QAlg^k$.

By the density of the reals and the continuity of measures,
we have that
\[\label{nucont}
V_{\pi} = \sup_{\Rpsi \arrowlt \pi} V_{\Rpsi} \quad \mathrm{a.s.},
\]
where $\Rpsi$ ranges over $\RAlg^k$.
It follows that
\[
\Ind_\vC(V_{\pi}) = \sup_{\Rpsi \arrowlt \pi} \Ind_\vC(V_{\Rpsi}) \quad \mathrm{a.s.},
\]
because $\Ind_\vC$ is lower-semicontinuous and order-preserving (in
each dimension),
as \eqref{niceset} is an open set in the right order topology on $[0,1]^k$.
Therefore, by the dominated convergence theorem, we have that
\[\label{e1}
\Expect \Ind_\vC(V_{\pi})
= \sup_{\Rpsi\arrowlt\pi} \Expect \Ind_\vC(V_{\Rpsi}).
\]
Recall that the polynomials $\{p_{n,C}\}_{n \in \omega}$ converge pointwise from below to 
$\Ind_\vC$ in $[0,1]^k$.
Therefore, by the dominated convergence theorem,
\[\label{e2}
\Expect \Ind_\vC(V_{\Rpsi})
= \sup_n \Expect p_{n,C} (\RealV_\Rpsi).  
\]
As $\RealV_{\closure{\Rpsi(i)}} \ge
\RealV_{\Rpsi(i)}$  a.s.\ for $i\le k$, we have that
\[
\Expect p_{n,\vC} (\RealV_\Rpsi)
&=
\Expect p^+_{n,\vC} (\RealV_\Rpsi) - 
\Expect p^-_{n,\vC}
(\RealV_{\Rpsi}) \\
&\ge \label{newbound12}
\Expect p^+_{n,\vC} (\RealV_\Rpsi) - 
\Expect p^-_{n,\vC}
(\RealV_{\closure{\Rpsi}}).
\]
Note that if $\Rpsi$ is a $\nu$-continuity set, then 
$V_{\closure{\Rpsi(i)}} = V_{\Rpsi(i)}$ 
a.s., and so
\[
\Expect p_{n,\vC} (\RealV_\Rpsi) &= 
\Expect p^+_{n,\vC} (\RealV_\Rpsi) - 
\Expect p^-_{n,\vC}
(\RealV_{\closure{\Rpsi}}).
\]
Again, 
dominated convergence theorem 
gives us
\[ 
\Expect \left(
{\textstyle \prod_{i=1}^k V_{\Rpsi(i)} }
\right ) &=
 \sup_{\sigma\arrowlt\Rpsi}\Expect \left (
{\textstyle \prod_{i=1}^k V_{\sigma(i)} }
\right )
\quad \text{and}\\
 \Expect \left(
{\textstyle \prod_{i=1}^k V_{\closure{\Rpsi(i)}} }
\right )&=
 \inf_{\tau\arrowgt\Rpsi}
 \Expect \left (
{\textstyle \prod_{i=1}^k V_{\closure{\tau(i)}} }
\right ),
\]
where $\sigma$ and $\tau$ range over $\QAlg^k$.
Therefore, by the linearity of expectation,
\[
\Expect p^+_{n,\vC}(\RealV_\Rpsi) &=
 \sup_{\sigma\arrowlt\Rpsi}
\Expect p^+_{n,\vC}(V_{\sigma}) 
\quad \text{and}\\
\Expect p^-_{n,\vC}(\RealV_{\closure{\Rpsi}}) &=
 \inf_{\tau\arrowgt\Rpsi}
\Expect p^-_{n,\vC}(V_{\closure{\tau}}),
\]
and so, if $\Rpsi$ is a $\nu$-continuity set, we have that 
\[
\Expect p_{n,\vC} (\RealV_\Rpsi) 
&= \sup_{\sigma\arrowlt\Rpsi}
\Expect p^+_{n,\vC}(V_{\sigma})-
 \inf_{\tau\arrowgt\Rpsi}
\Expect p^-_{n,\vC}(V_{\closure{\tau}}) \\
 &= \label{ub1234}
\sup_{\sigma\arrowlt\Rpsi\arrowlt\tau} \Expect q_{n,\vC}
(V_\sigma, V_{\closure{\tau}}). 
\]
Because $\nu$ has at most countably many point masses, those
$\Rpsi\in\RealIntervals^k$ that are $\nu$-continuity sets
are dense in $\RationalIntervals^k$.  On the other hand, for those $\Rpsi$ that are not $\nu$-continuity sets, \eqref{ub1234} is a lower bound, as can be shown from \eqref{newbound12}.  Therefore,
\[\label{e3}
\sup_{\Rpsi\arrowlt\pi} 
\Expect p_{n,\vC} (\RealV_\Rpsi) =\sup_{\Rpsi\arrowlt\pi}\,
\sup_{\sigma\arrowlt\Rpsi\arrowlt\tau} \Expect q_{n,\vC}
(V_\sigma, V_{\closure{\tau}}).
\]
Note that 
$
\{(\sigma, \tau) \st (\exists\Rpsi\arrowlt\pi)\ 
\sigma\arrowlt\Rpsi\arrowlt\tau\} = 
\{(\sigma, \tau) \st \sigma\arrowlt\pi \mathrm{~and~} \sigma \arrowlt \tau \}.
$
Hence
\[\label{e4}
\sup_{\Rpsi\arrowlt\pi}\,  
\sup_{\sigma\arrowlt\Rpsi}\, \sup_{ \tau \arrowgt\Rpsi} \Expect q_{n,\vC}
(V_\sigma, V_{\closure{\tau}})
=
\sup_{\sigma\arrowlt\pi}\,\sup_{ \tau \arrowgt\sigma} \Expect q_{n,\vC}
(V_\sigma, V_{\closure{\tau}}).
\]
Again by dominated convergence we have
\[\label{e5}
\sup_{ \tau \arrowgt \sigma} \Expect q_{n,\vC}
(V_\sigma, V_{\closure{\tau}})
=\Expect q_{n,\vC}
(V_\sigma, V_{\closure{\sigma}}).
\]
Combining \eqref{e0}, \eqref{e1}, \eqref{e2}, \eqref{e3}, \eqref{e4}, and \eqref{e5},
we have 
\[
\label{finalsup}
\Expect \Ind_\vC(V_{\pi})
=
\sup_n\,
\sup_{\sigma\arrowlt\pi} \Expect q_{n,\vC}
(V_\sigma, V_{\closure{\sigma}}).
\]   
Finally, by Proposition~\ref{polypoly}, the expectation
\[
\Expect q_{n,\vC}
(V_\sigma, V_{\closure{\sigma}})
\]
is a c.e.\ real relative to $\chi$, uniformly in
$\sigma$, $n$, $k$, $m$, $\pi$, and $C$.
Hence the supremum \eqref{finalsup} is a c.e.\ real
relative to $\chi$, uniformly in  $k$, $m$, $\pi$, and $C$.
\qed\end{cdfproof}


\section{Exchangeability in Probabilistic Functional Programming Languages}
\label{funcpure}

The computable de~Finetti theorem has implications for the semantics
of probabilistic functional programming languages, and in particular, gives
conditions under which it is possible to eliminate modifications of
non-local state.
Furthermore, an implementation of the computable de~Finetti theorem
itself performs this code transformation automatically.

For context, we provide some background on 
probabilistic functional programming languages.  We then describe the code transformation
performed by the computable de~Finetti theorem, using the example
of the P\'olya urn and Beta-Bernoulli process discussed earlier.
Finally, we discuss partial exchangeability and its role in recent
machine learning applications. 

\subsection{Probabilistic Functional Programming Languages}
Functional programming languages with probabilistic choice operators have
recently been proposed as universal languages for statistical
modeling  (e.g., IBAL \citep{Pfeffer01}, $\lambda_\circ$\citep{1452048}, 
Church \citep{GooManRoyBonTen2008}, and HANSEI
\citep{DBLP:conf/dsl/KiselyovS09}).  
Within domain theory, researchers have considered idealized functional languages that can
manipulate exact real numbers, such as 
Escard\'o's {\sc RealPCF+} \citep{MR1650876}
(based on Plotkin \citep{MR484798}), and functional
languages have also been extended by
probabilistic choice operators (e.g., by 
Escard\'o \citep{citeulike:5467401} 
and Saheb-Djahromi \citep{MR519861}).

The semantics of probabilistic programs have been studied
extensively in theoretical computer science in the context of
randomized algorithms, probabilistic model checking, and other
areas.
However, the application of probabilistic programs to
universal statistical modeling has a somewhat different character
from much of the other work on probabilistic programming languages.

In Bayesian analysis, the goal is to use observed data to understand 
unobserved
variables in a probabilistic model.  This type of inductive reasoning, from
evidence to hypothesis, can be thought of as inferring the hidden states
of a program that generates the observed output.
One speaks of the 
\emph{conditional execution} of
probabilistic programs,  in which they are ``run backwards'' to
sample from the conditional probability distribution given the
observed data.

A wide variety of algorithms implement
conditional inference in probabilistic functional programming.
Goodman et~al.~\citep{GooManRoyBonTen2008} describe the language
Church, which extends a pure subset of Scheme, 
and whose implementation MIT-Church performs approximate conditional
execution via Markov chain Monte Carlo
(which can be thought of as a random walk over the execution of a
Lisp machine).
Park, Pfenning, and Thrun~\citep{1452048} describe the language
$\lambda_\circ$, which extends OCaml,
and they implement approximate conditional execution
by Monte Carlo importance sampling.
Ramsey and Pfeffer~\citep{Ramsey2002} describe a stochastic lambda calculus whose semantics are given by \emph{measure terms}, which support the efficient computation of conditional expectations.

Finally, in nonparametric Bayesian statistics, higher-order distributions
(e.g., distributions on distributions, or distributions on trees)
arise naturally, and so it is helpful to work in a language that can
express these types.  Probabilistic functional programming languages
are therefore a convenient choice for expressing nonparametric models.

The representation of distributions by randomized algorithms that
produce samples can highlight algorithmic issues.  For example,
a distribution will, in general, have many different representations
as a probabilistic program, each with its own time, space, and
entropy complexity.  For example, both ways of sampling a
Beta-Bernoulli process described in Section~\ref{polyaex} can be
represented in, e.g., the Church probabilistic programming language.
One of the questions that motivated the present work was whether 
there is always an algorithm for sampling from the de~Finetti
measure when there is an algorithm for sampling the exchangeable sequence.
This question was first
raised by Roy et al.~\citep{ICML}. 
The computable de~Finetti theorem answers this question in the
affirmative, and, furthermore, shows that one can move between these
representations automatically.  In the following section, we provide
a concrete example of the representational change made possible  by
the computable de~Finetti transformation, using the syntax of the
Church probabilistic programming language.

\subsection{Code Transformations}

Church extends a pure subset of Scheme (a dialect of Lisp) with a stochastic,
binary-valued\footnote{The original Church paper defined the \flip\ procedure to
return {\tt true} or {\tt false}, but it is easy to move between these two
definitions.} \flip\ procedure, calls to which return independent, Bernoulli$(\frac 12)$-distributed random values in $\{0,1\}$.
Using the semantics of Church, it is possible to associate every closed Church expression (i.e., one without free variables) with a distribution on values.
For example,  evaluations of the expression
\begin{quote}
{\tt (+~(\flip) (\flip) (\flip))} 
\end{quote}
produce samples from the Binomial$(n=3,\,p=\frac12)$ distribution, while
evaluations of
\begin{quote}
{\tt ($\lambda$ (x) (if (= 1 (\flip)) x 0))} 
\end{quote}
always return a procedure, applications of which behave like the probability kernel $x \mapsto \frac12(\delta_x + \delta_0)$, 
where $\delta_r$ denotes the Dirac measure concentrated on the real~$r$.  Church
is call-by-value and so evaluations of
\begin{quote}
{\tt (= (\flip) (\flip))} 
\end{quote}
return {\tt true} and {\tt false} with equal probability, while the application of  the procedure 
\begin{quote}
{\tt ($\lambda$ (x) (= x x))}
\end{quote}
to the argument {\tt (\flip)}, written 
\begin{quote}
{\tt (($\lambda$ (x) (= x x)) (\flip))},
\end{quote}
always returns {\tt true}. (For more examples, see \citep{GooManRoyBonTen2008}.)

In Scheme, unlike Church, 
one can modify the state of a non-local variable using
mutation via the {\tt set!}\ procedure.  (In functional
programming languages, non-local state may be implemented via other
methods.  For example, in Haskell, one could use the state monad.)
If we consider introducing a {\tt set!}\ operator to Church, thereby
allowing a procedure to modify its environment using mutation,
it is not clear how one can, in a manner similar to above, associate procedures with probability kernels and closed expressions with distributions.
For example, a procedure could then keep a counter variable and return an increasing
sequence of integers on repeated calls.  Such a procedure would not correspond
with a probability kernel.

A generic way to translate code with mutation into code without mutation
is to perform a state-passing transformation, where the state is
explicitly threaded throughout the program. In particular, a
variable representing state is passed into all procedures as
an additional argument, transformed in lieu of {\tt set!}\
operations, and returned alongside the original return values at the end of
procedures.  Under such a transformation, the procedure in the
counter variable example would be transformed into one that accepted the current count and returned the incremented count.
One downside of such a transformation is that it obscures
conditional independencies in the program, and thus complicates 
inference from an algorithmic standpoint.

An alternative transformation 
is made possible by 
the computable de~Finetti theorem, which implies that a particular type of
\emph{exchangeable} mutation can be removed \emph{without} requiring
a state-passing transformation.  Furthermore, this alternative
transformation exposes the conditional independencies.
The rest of this section describes a concrete example of this
alternative transformation, and builds on the mathematical characterization of the Beta-Bernoulli process and the P\'{o}lya urn scheme as described in Section~\ref{polyaex}.

Recall that the P\'{o}lya urn scheme induces the Beta-Bernoulli
process, which can also be described directly as a sequence of
independent Bernoulli random variables with a shared parameter
sampled from a Beta distribution.  In Church it is possible to write
code corresponding to both descriptions, but expressing the P\'olya
urn scheme without the use of mutation requires that we keep track
of the counts and thread these values throughout the sequence.  If
instead we introduce the {\tt set!}\ operator and track the number
of red and black balls by mutating non-local state, we can compactly
represent the P\'{o}lya urn scheme in a way that mirrors the form of
the more direct description using Beta and Bernoulli random variables.

Fix $a, b> 0$, and define {\tt sample-beta-coin} and {\tt
sample-p\'olya-coin} as follows:

\noindent\newline
\begin{tabular}{cc}
\begin{minipage}{5.7cm}
\emph{(i)}
\begin{Verbatim}[commandchars=\\\{\},codes={\catcode`$=3\catcode`^=7}] 
({\bf\ttfamily{define}} (sample-beta-coin)
  ({\bf\ttfamily{let}} ((weight ({\sl\ttfamily{beta}} a b)))
    ($\lambda$ () ({\sl\ttfamily{flip}} weight)) ) )





\end{Verbatim}
\end{minipage}
&
\begin{minipage}{8.0cm}
\emph{(ii)}
\begin{Verbatim}[commandchars=\\\{\},codes={\catcode`$=3\catcode`^=7}] 
(define (sample-p\'olya-coin)
  (let ((red a)
        (total (+ a b)) )
    ($\lambda$ () (let ((x ({\sl\ttfamily{flip}} $\frac{\mathtt{red}}{\mathtt{total}}$)))
            (set! red (+ red x))
            (set! total (+ total 1))
            x ) ) )

\end{Verbatim}
\end{minipage}
\end{tabular}

\noindent Recall that, given a Church expression $E$, the evaluation of the {\tt($\lambda$ () $E$
)} special form in an environment $\rho$ creates a procedure of no arguments whose
application results in the evaluation of the expression $E$ in the
environment $\rho$.
The application of either {\tt sample-beta-coin} or
{\tt sample-p\'olya-coin} returns a procedure of no arguments 
whose application returns (random) binary values.  In particular, if 
we sample two procedures {\tt my-beta-coin} and {\tt my-p\'olya-coin} via

\begin{quote}
{\tt (define my-beta-coin (sample-beta-coin))}\\
{\tt(define my-p\'olya-coin (sample-p\'olya-coin))}
\end{quote}
then repeated applications of both 
{\tt my-beta-coin} and {\tt my-p\'olya-coin} produce random binary sequences that are Beta-Bernoulli processes.

Evaluating {\tt (my-beta-coin)} returns 1 with
probability {\tt weight} and 0 otherwise, where the shared
{\tt weight} parameter is itself drawn from a Beta$(a,b)$
distribution on $[0,1]$.  
The sequence induced by repeated applications of {\tt my-beta-coin} is exchangeable because applications of {\sl\ttfamily flip} return
independent samples. Note that the sequence is \emph{not} i.i.d.;  for example, 
an initial sequence of ten 1's would lead one to predict that the next
application is more likely to return 1 than 0.  However, conditioned on 
{\tt weight} (a variable hidden within the opaque procedure
{\tt my-beta-coin}) the sequence is i.i.d.  
If we sample another procedure, {\tt my-other-beta-coin}, via 
\begin{quote}
{\tt(define my-other-beta-coin (sample-beta-coin))}
\end{quote}
then its corresponding {\tt weight} variable will be independent,
and so repeated applications will generate a sequence that is independent of that generated by {\tt my-beta-coin}.

The code in \emph{(ii)} implements the P\'{o}lya urn scheme with $a$
red balls and $b$ black balls (see
\citep[][Chap.~11.4]{MR1093667}), and so the sequence of return values from repeated applications of {\tt my-p\'olya-coin} is exchangeable.  Therefore,
de~Finetti's theorem implies that 
the distribution of the sequence is equivalent to that induced by
i.i.d.\ draws from the directing random measure. 
In the case of the P\'{o}lya urn scheme, we know that the directing random
measure is a random Bernoulli measure whose parameter has a
Beta$(a,b)$ distribution.
In fact, the (random) distribution of each sample produced by 
{\tt my-beta-coin} is such a random Bernoulli measure.
Informally, we can therefore think of
{\tt sample-beta-coin}
as the de~Finetti measure of the
Beta-Bernoulli process.

Although the distributions on sequences induced by {\tt my-beta-coin} and
\linebreak
{\tt my-p\'olya-coin} are identical, there is an important semantic difference between these two
implementations caused by the use of {\tt set!}.  While applications
of {\tt sample-beta-coin} produce samples from the de~Finetti
measure in the sense described above,
applications of {\tt sample-p\'olya-coin} do not; successive applications of {\tt
my-p\'olya-coin} produce samples from different distributions, none of which is 
the
directing random measure for the sequence (a.s.).
In particular, the distribution on return values changes  
each iteration
as the sufficient statistics are updated 
(using the mutation operator {\tt set!}). 
In contrast, applications of {\tt my-beta-coin} do not modify non-local state;
in particular, the sequence produced by such applications is i.i.d.\ conditioned on the variable {\tt weight}, which does not change during the course of execution. 

An implementation of the computable de~Finetti theorem
(Theorem~\ref{CdeFtheorem}),
specialized to the case of binary sequences (in which case the de~Finetti measure is a distribution on Bernoulli measures and is thus determined by the distribution on $[0,1]$ of the random probability assigned to the value 1), transforms \emph{(ii)}
into a mutation-free procedure whose return values have the same
distribution as that of the samples produced by evaluating {\ttfamily ({\sl\ttfamily beta} a b)}.  

In the general case, given a program that generates an exchangeable sequence of reals, an implementation of the computable de~Finetti theorem produces
a mutation-free procedure {\tt generated-code} such that applications of the procedure {\tt sample-directing-random-measure} defined by

\begin{quote}
\begin{Verbatim}[commandchars=\\\{\},codes={\catcode`$=3\catcode`^=7}] 
({\bf\ttfamily{define}} (sample-directing-random-measure)
  ({\bf\ttfamily{let}} ((shared-randomness ({\sl\ttfamily{uniform}} 0 1)))
    ($\lambda$ () (generated-code shared-randomness)) ) )
\end{Verbatim}
\end{quote}
sample from the de~Finetti measure in the sense described above.
In particular, \emph{(ii)} would be transformed into a procedure  {\tt
generated-code} such that the sequences produced by repeated applications of the
procedures returned by 
{\tt sample-beta-coin} 
and 
{\tt sample-directing-random-measure} 
have the same distribution.

In addition to their simpler semantics, mutation-free procedures 
are often desirable for practical reasons. For example, having                 
sampled the directing random measure,                                           
an exchangeable sequence of random variables can be efficiently 
sampled in parallel without the overhead necessary to communicate               
sufficient statistics.  
Mansinghka~\citep{vkmthesis} describes some situations
where one can exploit conditional independence and exchangeability
in probabilistic programming languages for improved parallel
execution.

\subsection{Partial Exchangeability of Arrays and Other Data Structures}
\label{partialexch}

The example above involved binary sequences, but the computable de
Finetti theorem can be used to transform implementations of real
exchangeable sequences.  
Consider the following exchangeable sequence whose combinatorial structure is known as the Chinese restaurant process 
(see Aldous \citep{MR883646}).
Let $\alpha > 0$ be a computable real and let $H$ be a computable
distribution on $\Reals$.  
For $n\ge 1$, each $X_n$ is sampled in turn according to the conditional
distribution
\[
\Pr[X_{n+1} \given X_1,\dotsc, X_n ] =
\frac{1}{n+\alpha} \Bigl( \alpha\,H + \sum_{i=1}^n \delta_{X_i} \Bigr)
 \qquad \text{a.s.}
\]
The sequence $\{X_n\}_{n\ge1}$ is exchangeable and the directing random measure is a Dirichlet process whose ``base measure'' is $\alpha H$.
Given such a program, we can automatically recover the underlying
Dirichlet process prior, samples from which are random measures
whose discrete structure was characterized by Sethuraman's
``stick-breaking construction'' \citep{Sethuraman1994}.   Note that the
random measure is not produced in the same manner as Sethuraman's construction
and certainly is not of closed form.  But the resulting mathematical
objects have the same structure and distribution. 

Exchangeable sequences of random objects other than reals can often
be given de~Finetti-type representations.
For example, the Indian buffet process, defined by Griffiths and
Ghahramani \citep{Griffiths05infinitelatent}, is the combinatorial
process underlying a \emph{set-valued} exchangeable sequence that
can be written in a way analogous to the P\'{o}lya urn in
\emph{(ii)}.
Just as the Chinese restaurant process gives rise to the Dirichlet process,
the Indian buffet process gives rise to the Beta process (see Thibaux and Jordan
\citep{Thibaux2007} for more details).

In the case where the ``base measure'' of the underlying Beta
process is \emph{discrete}, the resulting exchangeable sequence of
sets corresponds to an exchangeable sequence of \emph{integer}
indices (encoding finite subsets of the countable support of the discrete base measure).  If we are given such a representation, the
computable de~Finetti theorem implies the existence of a computable
de~Finetti measure.

However, the case of a general base measure is more complicated.
A ``stick-breaking construction'' of the Indian buffet process given by 
Teh, G{\"{o}}r{\"{u}}r, and Ghahramani \citep{TehGorGha2007}  
is analogous to the code in
\emph{(i)}, but samples only a $\mathrm{\Delta}_1$-index for the
(a.s.\ finite) sets, rather than a canonical index (see Soare
\citep[][II.2]{MR882921}); however, many applications depend on
having a canonical index.
These observation were first noted by Roy et~al.~\citep{ICML}.  
Similar problems arise when using the Inverse L\'evy Measure method
\citep{MR1630084} to construct the Indian buffet process.
The computable de~Finetti theorem is not directly applicable 
in this case
because the theorem pertains only to exchangeable sequences of
real random variables, not random sets, although
an extension of the 
theorem to computable Polish spaces might suffice.

Combinatorial structures other than sequences have been given
de~Finetti-type representational theorems based on notions of 
\emph{partial} exchangeability.
For example, an array of random variables is called
\emph{separately} (or \emph{jointly}) exchangeable when its
distribution is invariant under (simultaneous) permutations of the
rows and columns and their higher-dimensional analogues.
Nearly fifty years after de~Finetti's result,
Aldous \citep{MR637937} and Hoover \citep{Hoover79}
showed that the entries of an infinite
array satisfying either separate or joint exchangeability
are conditionally i.i.d.
These results have been connected with the theory of graph limits by
Diaconis and Janson~\citep{DiaconisJanson} and 
Austin~\citep{MR2426176} 
by considering the adjacency matrix of an exchangeable random graph.

As we have seen with the Beta-Bernoulli process and other examples,
structured probabilistic models can often be represented in
multiple ways, each with its own advantages (e.g.,
representational simplicity,
compositionality, 
inherent parallelism, 
etc.).
Extensions of the computable de~Finetti theorem to partially
exchangeable settings could provide 
analogous transformations between representations
on a wider range of data structures, including many that are
increasingly used in practice.
For example, the Infinite Relational Model \citep{Kemp2006} can be
viewed as an urn scheme for a partially exchangeable array,
 while the hierarchical
stochastic block model constructed from a Mondrian process in \citep{RoyTeh2009} is described in a way that mirrors the Aldous-Hoover
representation, making the conditional independence explicit.


\subsubsection*{Acknowledgements} \ \\
C.E.F.\ has been partially supported by NSF Grant No.\ DMS-0901020, and
D.M.R.\ has been partially supported 
by an NSF Graduate Research Fellowship.
Some of the results in this paper were presented at the 
\emph{Computability in Europe}
conference in Heidelberg, Germany, July 19--24, 2009,
and an extended abstract 
\citep{DBLP:conf/cie/FreerR09} was published in the proceedings.
The authors would like to 
thank Nate Ackerman, Oleg Kiselyov, Vikash Mansinghka, Hartley Rogers, Chung-chieh
Shan,
and the anonymous referees of both the extended abstract and the present article for helpful comments.



\begin{thebibliography}{52}
\providecommand{\natexlab}[1]{#1}
\providecommand{\url}[1]{\texttt{#1}}
\providecommand{\urlprefix}{URL }
\expandafter\ifx\csname urlstyle\endcsname\relax
  \providecommand{\doi}[1]{doi:\discretionary{}{}{}#1}\else
  \providecommand{\doi}[1]{doi:\discretionary{}{}{}\begingroup
  \urlstyle{rm}\url{#1}\endgroup}\fi
\providecommand{\bibinfo}[2]{#2}

\bibitem[{Braverman and Cook(2006)}]{MR2208383}
\bibinfo{author}{M.~Braverman}, \bibinfo{author}{S.~Cook},
  \bibinfo{title}{Computing over the reals: foundations for scientific
  computing}, \bibinfo{journal}{Notices Amer. Math. Soc.}
  \bibinfo{volume}{53}~(\bibinfo{number}{3}) (\bibinfo{year}{2006})
  \bibinfo{pages}{318--329}.

\bibitem[{Brattka et~al.(2008)Brattka, Hertling, and
  Weihrauch}]{brattkatutorial}
\bibinfo{author}{V.~Brattka}, \bibinfo{author}{P.~Hertling},
  \bibinfo{author}{K.~Weihrauch}, \bibinfo{title}{A tutorial on computable
  analysis}, in: \bibinfo{editor}{S.~B. Cooper}, \bibinfo{editor}{B.~L{\"o}we},
  \bibinfo{editor}{A.~Sorbi} (Eds.), \bibinfo{booktitle}{New computational
  paradigms: changing conceptions of what is computable},
  \bibinfo{publisher}{Springer}, \bibinfo{address}{Berlin},
  \bibinfo{year}{2008}.

\bibitem[{Edalat(1997)}]{MR1619397}
\bibinfo{author}{A.~Edalat}, \bibinfo{title}{Domains for computation in
  mathematics, physics and exact real arithmetic}, \bibinfo{journal}{Bull.
  Symbolic Logic} \bibinfo{volume}{3}~(\bibinfo{number}{4})
  (\bibinfo{year}{1997}) \bibinfo{pages}{401--452}.

\bibitem[{Billingsley(1995)}]{MR1324786}
\bibinfo{author}{P.~Billingsley}, \bibinfo{title}{Probability and measure},
  \bibinfo{publisher}{John Wiley \& Sons Inc.}, \bibinfo{address}{New York},
  \bibinfo{edition}{third} edn., \bibinfo{year}{1995}.

\bibitem[{Kallenberg(2002)}]{MR1876169}
\bibinfo{author}{O.~Kallenberg}, \bibinfo{title}{Foundations of modern
  probability}, \bibinfo{publisher}{Springer}, \bibinfo{address}{New York},
  \bibinfo{edition}{second} edn., \bibinfo{year}{2002}.

\bibitem[{Kallenberg(2005)}]{MR2161313}
\bibinfo{author}{O.~Kallenberg}, \bibinfo{title}{Probabilistic symmetries and
  invariance principles}, \bibinfo{publisher}{Springer}, \bibinfo{address}{New
  York}, \bibinfo{year}{2005}.

\bibitem[{Dawid(1982)}]{MR675977}
\bibinfo{author}{A.~P. Dawid}, \bibinfo{title}{Intersubjective statistical
  models}, in: \bibinfo{booktitle}{Exchangeability in probability and
  statistics ({R}ome, 1981)}, \bibinfo{publisher}{North-Holland},
  \bibinfo{address}{Amsterdam}, \bibinfo{pages}{217--232},
  \bibinfo{year}{1982}.

\bibitem[{Lauritzen(1984)}]{MR754971}
\bibinfo{author}{S.~L. Lauritzen}, \bibinfo{title}{Extreme point models in
  statistics}, \bibinfo{journal}{Scand. J. Statist.}
  \bibinfo{volume}{11}~(\bibinfo{number}{2}) (\bibinfo{year}{1984})
  \bibinfo{pages}{65--91}.

\bibitem[{de~Finetti(1931)}]{deFinetti}
\bibinfo{author}{B.~de~Finetti}, \bibinfo{title}{Funzione caratteristica di un
  fenomeno aleatorio}, \bibinfo{journal}{Atti della R. Accademia Nazionale dei
  Lincei, Ser. 6.{\,} {{Memorie, Classe di Scienze Fisiche, Matematiche e
  Naturali}}} \bibinfo{volume}{4} (\bibinfo{year}{1931})
  \bibinfo{pages}{251--299}.

\bibitem[{de~Finetti(1937)}]{MR1508036}
\bibinfo{author}{B.~de~Finetti}, \bibinfo{title}{La pr\'evision : ses lois logiques, ses sources subjectives}, \bibinfo{journal}{Ann. Inst.  H.  Poincar\'e} \bibinfo{volume}{7}~(\bibinfo{number}{1}) (\bibinfo{year}{1937})
  \bibinfo{pages}{1--68}.


\bibitem[{Hewitt and Savage(1955)}]{MR0076206}
\bibinfo{author}{E.~Hewitt}, \bibinfo{author}{L.~J. Savage},
  \bibinfo{title}{Symmetric measures on {C}artesian products},
  \bibinfo{journal}{Trans. Amer. Math. Soc.} \bibinfo{volume}{80}
  (\bibinfo{year}{1955}) \bibinfo{pages}{470--501}.

\bibitem[{Ryll-Nardzewski(1957)}]{MR0088823}
\bibinfo{author}{C.~Ryll-Nardzewski}, \bibinfo{title}{On stationary sequences
  of random variables and the de {F}inetti's equivalence},
  \bibinfo{journal}{Colloq. Math.} \bibinfo{volume}{4} (\bibinfo{year}{1957})
  \bibinfo{pages}{149--156}.

\bibitem[{Kingman(1978)}]{MR0494344}
\bibinfo{author}{J.~F.~C. Kingman}, \bibinfo{title}{Uses of exchangeability},
  \bibinfo{journal}{Ann. Probability} \bibinfo{volume}{6}~(\bibinfo{number}{2})
  (\bibinfo{year}{1978}) \bibinfo{pages}{183--197}.

\bibitem[{Diaconis and Freedman(1984)}]{MR786142}
\bibinfo{author}{P.~Diaconis}, \bibinfo{author}{D.~Freedman},
  \bibinfo{title}{Partial exchangeability and sufficiency}, in:
  \bibinfo{booktitle}{Statistics: applications and new directions ({C}alcutta,
  1981)}, \bibinfo{publisher}{Indian Statist. Inst.},
  \bibinfo{address}{Calcutta}, \bibinfo{pages}{205--236}, \bibinfo{year}{1984}.

\bibitem[{Aldous(1985)}]{MR883646}
\bibinfo{author}{D.~J. Aldous}, \bibinfo{title}{Exchangeability and related
  topics}, in: \bibinfo{booktitle}{\'{E}cole d'\'et\'e de probabilit\'es de
  {S}aint-{F}lour, {XIII}---1983}, Lecture Notes in Math., vol. 1117,
  \bibinfo{publisher}{Springer}, \bibinfo{address}{Berlin},
  \bibinfo{pages}{1--198}, \bibinfo{year}{1985}.

\bibitem[{de~Finetti(1975)}]{MR1093667}
\bibinfo{author}{B.~de~Finetti}, \bibinfo{title}{Theory of probability. {V}ol.\
  2}, \bibinfo{publisher}{John Wiley \& Sons Ltd.}, \bibinfo{address}{London},
  \bibinfo{year}{1975}.

\bibitem[{Rogers(1987)}]{MR886890}
\bibinfo{author}{H.~Rogers, Jr.}, \bibinfo{title}{Theory of recursive functions
  and effective computability}, \bibinfo{publisher}{MIT Press},
  \bibinfo{address}{Cambridge, MA}, \bibinfo{edition}{second} edn.,
  \bibinfo{year}{1987}.

\bibitem[{Soare(1987{\natexlab{a}})}]{MR882921}
\bibinfo{author}{R.~I. Soare}, \bibinfo{title}{Recursively enumerable sets and
  degrees}, Perspectives in Mathematical Logic,
  \bibinfo{publisher}{Springer-Verlag}, \bibinfo{address}{Berlin},
  \bibinfo{year}{1987}{\natexlab{a}}.

\bibitem[{Grubba et~al.(2007)Grubba, Schr{\"o}der, and Weihrauch}]{MR2351939}
\bibinfo{author}{T.~Grubba}, \bibinfo{author}{M.~Schr{\"o}der},
  \bibinfo{author}{K.~Weihrauch}, \bibinfo{title}{Computable metrization},
  \bibinfo{journal}{Math. Logic Q.}
  \bibinfo{volume}{53}~(\bibinfo{number}{4-5}) (\bibinfo{year}{2007})
  \bibinfo{pages}{381--395}.

\bibitem[{Battenfeld et~al.(2007)Battenfeld, Schr{\"o}der, and
  Simpson}]{MR2328287}
\bibinfo{author}{I.~Battenfeld}, \bibinfo{author}{M.~Schr{\"o}der},
  \bibinfo{author}{A.~Simpson}, \bibinfo{title}{A convenient category of
  domains}, in: \bibinfo{booktitle}{Computation, meaning, and logic: articles
  dedicated to {G}ordon {P}lotkin}, vol. \bibinfo{volume}{172} of
  \emph{\bibinfo{series}{Electron. Notes Theor. Comput. Sci.}},
  \bibinfo{publisher}{Elsevier}, \bibinfo{address}{Amsterdam},
  \bibinfo{pages}{69--99}, \bibinfo{year}{2007}.

\bibitem[{Weihrauch(2000{\natexlab{a}})}]{358357}
\bibinfo{author}{K.~Weihrauch}, \bibinfo{title}{Computable analysis: an
  introduction}, \bibinfo{publisher}{Springer}, \bibinfo{address}{Berlin},
  \bibinfo{year}{2000}{\natexlab{a}}.

\bibitem[{Weihrauch and Zheng(2000)}]{MR1745071}
\bibinfo{author}{K.~Weihrauch}, \bibinfo{author}{X.~Zheng},
  \bibinfo{title}{Computability on continuous, lower semi-continuous and upper
  semi-continuous real functions}, \bibinfo{journal}{Theoret. Comput. Sci.}
  \bibinfo{volume}{234}~(\bibinfo{number}{1-2}) (\bibinfo{year}{2000})
  \bibinfo{pages}{109--133}.

\bibitem[{Schr{\"o}der(2007)}]{MR2351942}
\bibinfo{author}{M.~Schr{\"o}der}, \bibinfo{title}{Admissible representations
  for probability measures}, \bibinfo{journal}{Math. Logic Q.}
  \bibinfo{volume}{53}~(\bibinfo{number}{4-5}) (\bibinfo{year}{2007})
  \bibinfo{pages}{431--445}.

\bibitem[{Bosserhoff(2008)}]{Bosserhoff08}
\bibinfo{author}{V.~Bosserhoff}, \bibinfo{title}{Notions of probabilistic
  computability on represented spaces}, \bibinfo{journal}{J. of Universal
  Comput. Sci.} \bibinfo{volume}{14}~(\bibinfo{number}{6})
  (\bibinfo{year}{2008}) \bibinfo{pages}{956--995}.

\bibitem[{Weihrauch(1999)}]{MR1694441}
\bibinfo{author}{K.~Weihrauch}, \bibinfo{title}{Computability on the
  probability measures on the {B}orel sets of the unit interval},
  \bibinfo{journal}{Theoret. Comput. Sci.}
  \bibinfo{volume}{219}~(\bibinfo{number}{1--2}) (\bibinfo{year}{1999})
  \bibinfo{pages}{421--437}.

\bibitem[{Schr\"{o}der and Simpson(2006)}]{1222755}
\bibinfo{author}{M.~Schr\"{o}der}, \bibinfo{author}{A.~Simpson},
  \bibinfo{title}{Representing probability measures using probabilistic
  processes}, \bibinfo{journal}{J. Complex.}
  \bibinfo{volume}{22}~(\bibinfo{number}{6}) (\bibinfo{year}{2006})
  \bibinfo{pages}{768--782}.

\bibitem[{Brattka and Presser(2003)}]{945270}
\bibinfo{author}{V.~Brattka}, \bibinfo{author}{G.~Presser},
  \bibinfo{title}{Computability on subsets of metric spaces},
  \bibinfo{journal}{Theor. Comput. Sci.}
  \bibinfo{volume}{305}~(\bibinfo{number}{1-3}) (\bibinfo{year}{2003})
  \bibinfo{pages}{43--76}.

\bibitem[{M{\"{u}}ller(1999)}]{310035}
\bibinfo{author}{N.~T. M{\"{u}}ller}, \bibinfo{title}{Computability on random
  variables}, \bibinfo{journal}{Theor. Comput. Sci.}
  \bibinfo{volume}{219}~(\bibinfo{number}{1-2}) (\bibinfo{year}{1999})
  \bibinfo{pages}{287--299}.

\bibitem[{Weihrauch(2000{\natexlab{b}})}]{DBLP:conf/cca/Weihrauch00}
\bibinfo{author}{K.~Weihrauch}, \bibinfo{title}{On computable metric spaces
  Tietze-Urysohn extension is computable}, in: \bibinfo{editor}{J.~Blanck},
  \bibinfo{editor}{V.~Brattka}, \bibinfo{editor}{P.~Hertling} (Eds.),
  \bibinfo{booktitle}{Computability and Complexity in Analysis, 4th
  International Workshop, CCA 2000, Swansea, UK, September 17-19, 2000,
  Selected Papers}, vol. \bibinfo{volume}{2064} of
  \emph{\bibinfo{series}{Lecture Notes in Comput. Sci.}},
  \bibinfo{publisher}{Springer}, \bibinfo{pages}{357--368},
  \bibinfo{year}{2000}{\natexlab{b}}.

\bibitem[{Pour{-El} and Richards(1989)}]{MR1005942}
\bibinfo{author}{M.~B. Pour{-El}}, \bibinfo{author}{J.~I. Richards},
  \bibinfo{title}{Computability in analysis and physics},
  \bibinfo{publisher}{Springer}, \bibinfo{address}{Berlin},
  \bibinfo{year}{1989}.

\bibitem[{Pfeffer(2001)}]{Pfeffer01}
\bibinfo{author}{A.~Pfeffer}, \bibinfo{title}{{IBAL: A probabilistic rational
  programming language}}, in: \bibinfo{booktitle}{Proc. of the 17th Int. Joint
  Conf. on Artificial Intelligence}, \bibinfo{publisher}{Morgan Kaufmann
  Publ.}, \bibinfo{pages}{733--740}, \bibinfo{year}{2001}.

\bibitem[{Park et~al.(2008)Park, Pfenning, and Thrun}]{1452048}
\bibinfo{author}{S.~Park}, \bibinfo{author}{F.~Pfenning},
  \bibinfo{author}{S.~Thrun}, \bibinfo{title}{A probabilistic language based on
  sampling functions}, \bibinfo{journal}{ACM Trans. Program. Lang. Syst.}
  \bibinfo{volume}{31}~(\bibinfo{number}{1}) (\bibinfo{year}{2008})
  \bibinfo{pages}{1--46}.

\bibitem[{Goodman et~al.(2008)Goodman, Mansinghka, Roy, Bonawitz, and
  Tenenbaum}]{GooManRoyBonTen2008}
\bibinfo{author}{N.~D. Goodman}, \bibinfo{author}{V.~K. Mansinghka},
  \bibinfo{author}{D.~M. Roy}, \bibinfo{author}{K.~Bonawitz},
  \bibinfo{author}{J.~B. Tenenbaum}, \bibinfo{title}{Church: a language for
  generative models}, in: \bibinfo{booktitle}{Uncertainty in Artificial
  Intelligence}, \bibinfo{year}{2008}.

\bibitem[{Kiselyov and Shan(2009)}]{DBLP:conf/dsl/KiselyovS09}
\bibinfo{author}{O.~Kiselyov}, \bibinfo{author}{C.~Shan},
  \bibinfo{title}{Embedded probabilistic programming}, in:
  \bibinfo{editor}{W.~M. Taha} (Ed.), \bibinfo{booktitle}{Domain-Specific
  Languages, IFIP TC 2 Working Conference, DSL 2009, Oxford, UK, July 15-17,
  2009, Proceedings}, vol. \bibinfo{volume}{5658} of
  \emph{\bibinfo{series}{Lecture Notes in Comput. Sci.}},
  \bibinfo{publisher}{Springer}, \bibinfo{pages}{360--384},
  \bibinfo{year}{2009}.

\bibitem[{Escard{\'o} and Streicher(1999)}]{MR1650876}
\bibinfo{author}{M.~Escard{\'o}}, \bibinfo{author}{T.~Streicher},
  \bibinfo{title}{Induction and recursion on the partial real line with
  applications to {R}eal {PCF}}, \bibinfo{journal}{Theoret. Comput. Sci.}
  \bibinfo{volume}{210}~(\bibinfo{number}{1}) (\bibinfo{year}{1999})
  \bibinfo{pages}{121--157}.

\bibitem[{Plotkin(7778)}]{MR484798}
\bibinfo{author}{G.~D. Plotkin}, \bibinfo{title}{L{CF} considered as a
  programming language}, \bibinfo{journal}{Theoret. Comput. Sci.}
  \bibinfo{volume}{5}~(\bibinfo{number}{3}) (\bibinfo{year}{1977/78})
  \bibinfo{pages}{223--255}.

\bibitem[{Escard\'{o}(2009)}]{citeulike:5467401}
\bibinfo{author}{M.~Escard\'{o}}, \bibinfo{title}{Semi-decidability of may,
  must and probabilistic testing in a higher-type setting},
  \bibinfo{journal}{Electron. Notes in Theoret. Comput. Sci.}
  \bibinfo{volume}{249} (\bibinfo{year}{2009}) \bibinfo{pages}{219--242}.

\bibitem[{Saheb-Djahromi(1978)}]{MR519861}
\bibinfo{author}{N.~Saheb-Djahromi}, \bibinfo{title}{Probabilistic {LCF}}, in:
  \bibinfo{booktitle}{{M}athematical {F}oundations of {C}omputer {S}cience, 1978
  ({P}roc. {S}eventh {S}ympos., {Z}akopane, 1978)}, vol.~\bibinfo{volume}{64}
  of \emph{\bibinfo{series}{Lecture Notes in Comput. Sci.}},
  \bibinfo{publisher}{Springer}, \bibinfo{address}{Berlin},
  \bibinfo{pages}{442--451}, \bibinfo{year}{1978}.

\bibitem[{Ramsey and Pfeffer(2002)}]{Ramsey2002}
\bibinfo{author}{N.~Ramsey}, \bibinfo{author}{A.~Pfeffer},
  \bibinfo{title}{{Stochastic lambda calculus and monads of probability
  distributions}}, \bibinfo{journal}{Proc. of the 29th ACM SIGPLAN-SIGACT Symp.
  on Principles of Program. Lang.}  (\bibinfo{year}{2002})
  \bibinfo{pages}{154--165}.

\bibitem[{Roy et~al.(2008)Roy, Mansinghka, Goodman, and Tenenbaum}]{ICML}
\bibinfo{author}{D.~M. Roy}, \bibinfo{author}{V.~K. Mansinghka},
  \bibinfo{author}{N.~D. Goodman}, \bibinfo{author}{J.~B. Tenenbaum},
  \bibinfo{title}{A stochastic programming perspective on nonparametric
  {B}ayes}, in: \bibinfo{booktitle}{Nonparametric {B}ayesian Workshop, Int.
  Conf. on Machine Learning}, \bibinfo{year}{2008}.




\bibitem[{Mansinghka(2009)}]{vkmthesis}
\bibinfo{author}{V.~K. Mansinghka}, \bibinfo{title}{Natively probabilistic
  computation}, Ph.D. thesis, \bibinfo{school}{Massachusetts Institute of
  Technology}, \bibinfo{year}{2009}.

\bibitem[{Sethuraman(1994)}]{Sethuraman1994}
\bibinfo{author}{J.~Sethuraman}, \bibinfo{title}{{A constructive definition of
  Dirichlet priors}}, \bibinfo{journal}{Statistica Sinica} \bibinfo{volume}{4}
  (\bibinfo{year}{1994}) \bibinfo{pages}{639--650}.

\bibitem[{Griffiths and Ghahramani(2005)}]{Griffiths05infinitelatent}
\bibinfo{author}{T.~L. Griffiths}, \bibinfo{author}{Z.~Ghahramani},
  \bibinfo{title}{Infinite latent feature models and the {I}ndian buffet
  process}, in: \bibinfo{booktitle}{Adv. in Neural Inform. Processing Syst.
  17}, \bibinfo{publisher}{MIT Press, Cambridge, MA},
  \bibinfo{pages}{475--482}, \bibinfo{year}{2005}.

\bibitem[{Thibaux and Jordan(2007)}]{Thibaux2007}
\bibinfo{author}{R.~Thibaux}, \bibinfo{author}{M.~I. Jordan},
  \bibinfo{title}{Hierarchical beta processes and the {I}ndian buffet process},
  in: \bibinfo{booktitle}{Proc. of the 11th Conf. on A.I. and Stat.},
  \bibinfo{year}{2007}.

\bibitem[{Teh et~al.(2007)Teh, G{\"{o}}r{\"{u}}r, and
  Ghahramani}]{TehGorGha2007}
\bibinfo{author}{Y.~W. Teh}, \bibinfo{author}{D.~G{\"{o}}r{\"{u}}r},
  \bibinfo{author}{Z.~Ghahramani}, \bibinfo{title}{Stick-breaking construction
  for the {I}ndian buffet process}, in: \bibinfo{booktitle}{Proc. of the 11th
  Conf. on A.I. and Stat.}, \bibinfo{year}{2007}.

\bibitem[{Wolpert and Ickstadt(1998)}]{MR1630084}
\bibinfo{author}{R.~L. Wolpert}, \bibinfo{author}{K.~Ickstadt},
  \bibinfo{title}{Simulation of {L}\'evy random fields}, in:
  \bibinfo{booktitle}{Practical nonparametric and semiparametric {B}ayesian
  statistics}, vol. \bibinfo{volume}{133} of \emph{\bibinfo{series}{Lecture
  Notes in Statist.}}, \bibinfo{publisher}{Springer},
\bibinfo{address}{New
  York}, \bibinfo{pages}{227--242}, \bibinfo{year}{1998}.

\bibitem[{Aldous(1981)}]{MR637937}
\bibinfo{author}{D.~J. Aldous}, \bibinfo{title}{Representations for partially
  exchangeable arrays of random variables}, \bibinfo{journal}{J. Multivariate
  Analysis} \bibinfo{volume}{11}~(\bibinfo{number}{4}) (\bibinfo{year}{1981})
  \bibinfo{pages}{581--598}.

\bibitem[{Hoover(1979)}]{Hoover79}
\bibinfo{author}{D.~N. Hoover}, \bibinfo{title}{Relations on probability spaces
  and arrays of random variables}, \bibinfo{note}{preprint, Institute for
  Advanced Study, Princeton, NJ}, \bibinfo{year}{1979}.

\bibitem[{Diaconis and Janson(2008)}]{DiaconisJanson}
\bibinfo{author}{P.~Diaconis}, \bibinfo{author}{S.~Janson},
  \bibinfo{title}{Graph limits and exchangeable random graphs},
  \bibinfo{journal}{Rendiconti di Matematica, Ser. VII}
  \bibinfo{volume}{28}~(\bibinfo{number}{1}) (\bibinfo{year}{2008})
  \bibinfo{pages}{33--61}.

\bibitem[{Austin(2008)}]{MR2426176}
\bibinfo{author}{T.~Austin}, \bibinfo{title}{On exchangeable random variables
  and the statistics of large graphs and hypergraphs},
  \bibinfo{journal}{Probab. Surv.} \bibinfo{volume}{5} (\bibinfo{year}{2008})
  \bibinfo{pages}{80--145}.

\bibitem[{Kemp et~al.(2006)Kemp, Tenenbaum, Griffiths, Yamada, and
  Ueda}]{Kemp2006}
\bibinfo{author}{C.~Kemp}, \bibinfo{author}{J.~Tenenbaum},
  \bibinfo{author}{T.~Griffiths}, \bibinfo{author}{T.~Yamada},
  \bibinfo{author}{N.~Ueda}, \bibinfo{title}{{Learning systems of concepts with
  an infinite relational model}}, in: \bibinfo{booktitle}{Proc. of the 21st
  Nat. Conf. on Artificial Intelligence}, \bibinfo{year}{2006}.

\bibitem[{Roy and Teh(2009)}]{RoyTeh2009}
\bibinfo{author}{D.~M. Roy}, \bibinfo{author}{Y.~W. Teh}, \bibinfo{title}{The
  {M}ondrian process}, in: \bibinfo{booktitle}{Adv. in Neural Inform.
  Processing Syst. 21}, \bibinfo{year}{2009}.

\bibitem[{Freer and Roy(2009)}]{DBLP:conf/cie/FreerR09}
\bibinfo{author}{C.~E. Freer}, \bibinfo{author}{D.~M. Roy},
  \bibinfo{title}{Computable exchangeable sequences have computable de
  {F}inetti measures}, in: \bibinfo{editor}{K.~Ambos-Spies},
  \bibinfo{editor}{B.~L{\"o}we}, \bibinfo{editor}{W.~Merkle} (Eds.),
  \bibinfo{booktitle}{Mathematical Theory and Computational Practice (CiE
  2009), Proc. of the 5th Conf. on Computability in Europe}, vol.
  \bibinfo{volume}{5635} of \emph{\bibinfo{series}{Lecture Notes in Comput.
  Sci.}}, \bibinfo{publisher}{Springer}, \bibinfo{pages}{218--231},
  \bibinfo{year}{2009}.

\end{thebibliography}


\end{document}